\newcommand{\id}{I}
\DeclareMathOperator{\rank}{rank}
\DeclareMathOperator{\range}{range}
\DeclareMathOperator{\linspan}{span}
\DeclareMathOperator{\essinf}{ess\,inf}
\providecommand{\bignorm}[1]{\bigl\lVert#1\bigr\rVert}
\providecommand{\Bignorm}[1]{\Bigl\lVert#1\Bigr\rVert}
\begin{document}
\bibliographystyle{plain}

\title
{Kolmogorov widths and low-rank approximations of parametric elliptic PDEs
\thanks{%
Research supported by the European Research Council under grant ERC AdG BREAD.
}
}
\author{ 
Markus Bachmayr and Albert Cohen
}
\hbadness=10000
\vbadness=10000
\vbadness=10000
\newtheorem{lemma}{Lemma}[section]
\newtheorem{prop}[lemma]{Proposition}
\newtheorem{cor}[lemma]{Corollary}
\newtheorem{theorem}[lemma]{Theorem}
\theoremstyle{definition}
\newtheorem{remark}[lemma]{Remark}
\newtheorem{example}[lemma]{Example}
\newtheorem{definition}[lemma]{Definition}
\newtheorem{proper}[lemma]{Properties}
\newtheorem{assumption}[lemma]{Assumption}
\newenvironment{disarray}{\everymath{\displaystyle\everymath{}}\array}{\endarray}

\def\RR{\rm \hbox{I\kern-.2em\hbox{R}}}
\def\NN{\rm \hbox{I\kern-.2em\hbox{N}}}
\def\ZZ{\rm {{\rm Z}\kern-.28em{\rm Z}}}
\def\CC{\rm \hbox{C\kern -.5em {\raise .32ex \hbox{$\scriptscriptstyle
|$}}\kern
-.22em{\raise .6ex \hbox{$\scriptscriptstyle |$}}\kern .4em}}
\def\vp{\varphi}
\def\<{\langle}
\def\>{\rangle}
\def\t{\tilde}
\def\i{\infty}
\def\e{\varepsilon}
\def\sm{\setminus}
\def\nl{\newline}
\def\o{\overline}
\def\wt{\widetilde}
\def\wh{\widehat}
\def\cT{{\cal T}}
\def\cA{{\cal A}}
\def\cI{{\cal I}}
\def\cV{{\cal V}}
\def\cB{{\cal B}}
\def\cF{{\cal F}}

\def\cR{{\cal R}}
\def\cD{{\cal D}}
\def\cP{{\cal P}}
\def\cJ{{\cal J}}
\def\cM{{\cal M}}
\def\cO{{\cal O}}
\def\Chi{\raise .3ex
\hbox{\large $\chi$}} \def\vp{\varphi}
\def\lsima{\hbox{\kern -.6em\raisebox{-1ex}{$~\stackrel{\textstyle<}{\sim}~$}}\kern -.4em}
\def\lsim{\hbox{\kern -.2em\raisebox{-1ex}{$~\stackrel{\textstyle<}{\sim}~$}}\kern -.2em}
\def\[{\Bigl [}
\def\]{\Bigr ]}
\def\({\Bigl (}
\def\){\Bigr )}
\def\[{\Bigl [}
\def\]{\Bigr ]}
\def\({\Bigl (}
\def\){\Bigr )}
\def\L{\pounds}
\def\pr{{\rm Prob}}
\newcommand{\cs}[1]{{\color{magenta}{#1}}}
\def\ds{\displaystyle}
\def\ev#1{\vec{#1}}     %
\newcommand{\lt}{\ell^{2}(\nabla)}
\def\Supp#1{{\rm supp\,}{#1}}
\def\R{\mathbb{R}}
\def\E{\mathbb{E}}
\def\nl{\newline}
\def\T{{\relax\ifmmode I\!\!\hspace{-1pt}T\else$I\!\!\hspace{-1pt}T$\fi}}
\def\N{\mathbb{N}}
\def\Z{\mathbb{Z}}
\def\P{\mathbb{P}}
\def\N{\mathbb{N}}
\def\Zd{\Z^d}
\def\Q{\mathbb{Q}}
\def\C{\mathbb{C}}
\def\Rd{\R^d}
\def\gsim{\mathrel{\raisebox{-4pt}{$\stackrel{\textstyle>}{\sim}$}}}
\def\sime{\raisebox{0ex}{$~\stackrel{\textstyle\sim}{=}~$}}
\def\lsim{\raisebox{-1ex}{$~\stackrel{\textstyle<}{\sim}~$}}
\def\divergence{\operatorname{div}}
\def\M{M}  \def\NN{N}                  %
\def\L{{\ell}}               %
\def\Le{{\ell^1}}            %
\def\Lz{{\ell^2}}
\def\Let{{\tilde\ell^1}}     %
\def\Lzt{{\tilde\ell^2}}
\def\Ltw{\ell^\tau^w(\nabla)}
\def\t#1{\tilde{#1}}
\def\la{\lambda}
\def\La{\Lambda}
\def\ga{\gamma}
\def\BV{{\rm BV}}
\def\Ga{\eta}
\def\al{\alpha}
\def\cZ{{\cal Z}}
\def\cA{{\cal A}}
\def\cU{{\cal U}}
\def\argmin{\mathop{\rm argmin}}
\def\argmax{\mathop{\rm argmax}}
\def\prob{\mathop{\rm prob}}
\def\A{\mathop{\rm Alg}}

\def \bphi{{\bf\phi}}

\def\cO{{\cal O}}
\def\cA{{\cal A}}
\def\cC{{\cal C}}
\def\cS{{\cal F}}
\def\bu{{\bf u}}
\def\bz{{\bf z}}
\def\bZ{{\bf Z}}
\def\bI{{\bf I}}
\def\cE{{\cal E}}
\def\cD{{\cal D}}
\def\cG{{\cal G}}
\def\cI{{\cal I}}
\def\cJ{{\cal J}}
\def\cM{{\cal M}}
\def\cN{{\cal N}}
\def\cT{{\cal T}}
\def\cU{{\cal U}}
\def\cV{{\cal V}}
\def\cW{{\cal W}}
\def\cL{{\cal L}}
\def\cB{{\cal B}}
\def\cG{{\cal G}}
\def\cK{{\cal K}}
\def\cS{{\cal S}}
\def\cP{{\cal P}}
\def\cQ{{\cal Q}}
\def\cR{{\cal R}}
\def\cU{{\cal U}}
\def\bL{{\bf L}}
\def\bl{{\bf l}}
\def\bK{{\bf K}}
\def\bC{{\bf C}}
\def\X{X\in\{L,R\}}
\def\ph{{\varphi}}
\def\D{{\Delta}}
\def\H{{\cal H}}
\def\bM{{\bf M}}
\def\bx{{\bf x}}
\def\bj{{\bf j}}
\def\bG{{\bf G}}
\def\bP{{\bf P}}
\def\bW{{\bf W}}
\def\bT{{\bf T}}
\def\bV{{\bf V}}
\def\bv{{\bf v}}
\def\bt{{\bf t}}
\def\bz{{\bf z}}
\def\bw{{\bf w}}
\def \meas {{\rm meas}}
\def\rhom{{\rho^m}}
\def\diff{\hbox{\tiny $\Delta$}}
\def\EE{{\rm Exp}}
\def\lll{\langle}
\def\argmin{\mathop{\rm argmin}}
\def\argmax{\mathop{\rm argmax}}
\def\dJ{\nabla}
\newcommand{\ba}{{\bf a}}
\newcommand{\bb}{{\bf b}}
\newcommand{\bc}{{\bf c}}
\newcommand{\bd}{{\bf d}}
\newcommand{\bs}{{\bf s}}
\newcommand{\bff}{{\bf f}}
\newcommand{\bp}{{\bf p}}
\newcommand{\bg}{{\bf g}}
\newcommand{\by}{{\bf y}}
\newcommand{\br}{{\bf r}}
\newcommand{\be}{\begin{equation}}
\newcommand{\ee}{\end{equation}}
\newcommand{\bex}{\begin{equation*}}
\newcommand{\eex}{\end{equation*}}
\newcommand{\bea}{$$ \begin{array}{lll}}
\newcommand{\eea}{\end{array} $$}
\def \Vol{\mathop{\rm  Vol}}
\def \mes{\mathop{\rm mes}}
\def \Prob{\mathop{\rm  Prob}}
\def \exp{\mathop{\rm    exp}}
\def \sign{\mathop{\rm   sign}}
\def \sp{\mathop{\rm   span}}
\def \vphi{{\varphi}}
\def \csp{\overline \mathop{\rm   span}}
\def \cost{\mathop{\rm   cost}}

\newcommand{\beqn}{\begin{equation}}
\newcommand{\eeqn}{\end{equation}}

\newenvironment{Proof}{\noindent{\bf Proof:}\quad}{\endproof}

\renewcommand{\theequation}{\thesection.\arabic{equation}}
\renewcommand{\thefigure}{\thesection.\arabic{figure}}

\makeatletter
\@addtoreset{equation}{section}
\makeatother

\newcommand\abs[1]{\left|#1\right|}
\newcommand\clos{\mathop{\rm clos}\nolimits}
\newcommand\trunc{\mathop{\rm trunc}\nolimits}
\renewcommand\d{d}
\newcommand\dd{d}
\newcommand\diag{\mathop{\rm diag}}
\newcommand\dist{\mathop{\rm dist}}
\newcommand\diam{\mathop{\rm diam}}
\newcommand\cond{\mathop{\rm cond}\nolimits}
\newcommand\eref[1]{{\rm (\ref{#1})}}
\newcommand{\iref}[1]{{\rm (\ref{#1})}}
\newcommand\Hnorm[1]{\norm{#1}_{H^s([0,1])}}
\def\int{\intop\limits}
\renewcommand\labelenumi{(\roman{enumi})}
\newcommand\lnorm[1]{\norm{#1}_{\ell^2(\Z)}}
\newcommand\Lnorm[1]{\norm{#1}_{L_2([0,1])}}
\newcommand\LR{{L_2(\R)}}
\newcommand\LRnorm[1]{\norm{#1}_\LR}
\newcommand\Matrix[2]{\hphantom{#1}_#2#1}
\newcommand\norm[1]{\left\|#1\right\|}
\newcommand\ogauss[1]{\left\lceil#1\right\rceil}
\newcommand{\QED}{\hfill
\raisebox{-2pt}{\rule{5.6pt}{8pt}\rule{4pt}{0pt}}%
  \smallskip\par}
\newcommand\Rscalar[1]{\scalar{#1}_\R}
\newcommand\scalar[1]{\left(#1\right)}
\newcommand\Scalar[1]{\scalar{#1}_{[0,1]}}
\newcommand\Span{\mathop{\rm span}}
\newcommand\supp{\mathop{\rm supp}}
\newcommand\ugauss[1]{\left\lfloor#1\right\rfloor}
\newcommand\with{\, : \,}
\newcommand\Null{{\bf 0}}
\newcommand\bA{{\bf A}}
\newcommand\bB{{\bf B}}
\newcommand\bR{{\bf R}}
\newcommand\bD{{\bf D}}
\newcommand\bE{{\bf E}}
\newcommand\bF{{\bf F}}
\newcommand\bH{{\bf H}}
\newcommand\bU{{\bf U}}
\newcommand\cH{{\cal H}}
\newcommand\sinc{{\rm sinc}}
\def\enorm#1{| \! | \! | #1 | \! | \! |}

\newcommand{\dm}{\frac{d-1}{d}}

\let\bm\bf
\newcommand{\bbeta}{{\mbox{\boldmath$\beta$}}}
\newcommand{\bal}{{\mbox{\boldmath$\alpha$}}}
\newcommand{\bbi}{{\bm i}}

\def\nnew{\color{Red}}
\def\mnew{\color{Blue}}

\maketitle
\date{}
\begin{abstract}
Kolmogorov $n$-widths and low-rank approximations are studied for families of 
elliptic diffusion PDEs parametrized by the diffusion coefficients. The decay of the 
$n$-widths can be controlled by that of the error achieved by best $n$-term approximations
using polynomials in the parametric variable. However, we prove that in certain relevant
instances where the diffusion coefficients are piecewise constant over a partition of 
the physical domain, the $n$-widths exhibit
significantly faster decay. This, in turn, yields
a theoretical justification of the fast convergence of reduced basis or POD methods 
when treating such parametric PDEs. Our results are confirmed by numerical
experiments, which also reveal the influence of the partition geometry on the
decay of the $n$-widths.
\end{abstract}

\section{Introduction}

Solving a parameter-dependent family of partial differential equations for a large number of different parameter values can be computationally demanding, since the solution of the problem for each single set of parameters typically already requires substantial resources. Put in abstract form,
one aims to find the solution $u$ of 
\be 
\label{paramdep}
\cP(u, y) = 0
\ee
for many different values of a vector $y=(y_1,\dots,y_d)$ in a certain range $U\subset \R^d$,
where $\cP$ is a partial differential operator $\cP$ parametrized by $y$. 

We assume that the problem is uniformly well posed in a separable Hilbert space $V$
whose elements depend on
a physical variable $x$ ranging in a domain $D\subset \R^m$: for each value of $y\in U$, the solution $u(y)$ 
belongs to $V$. 
We thus regard $u$ either as the function $y\mapsto u(y)$ from $U$ to $V$, or
as the function $(x,y)\mapsto u(x,y):=u(y)(x)$ from $D\times U$ to $\R$.

Many methods for reducing the complexity of such a task are explicitly or implicitly based on the existence of efficient approximations 
$u_n$ of $u$
of the general separable form
\be\label{generallowrank}
u_n(x,y):=  \sum_{k=1}^{n} v_k(x) \, \phi_k(y),
\ee
for some functions $\{v_1,\dots,v_n\}$ and $\{\phi_1,\dots,\phi_n\}$.
For instance, in the reduced basis method \cite{RHP}, one constructs $\{v_k\}_{k=1,\ldots,n}$ 
as particular instances $v_k=u(y^k)$ of the solution for well-chosen values $y^k\in U$, so that the problem
can be solved rapidly by applying, for each given $y$, the Galerkin method in the $n$-dimensional space spanned 
these basis functions. Similarly, the POD method \cite{KV} constructs $\{v_k\}_{k=1,\ldots,n}$ 
by applying a principal component analysis on a representative set of instances.
Both methods amount to an expansion of $u$ of the above form \eqref{generallowrank}, although the latter is not explicitly constructed
since the values $(\phi_k(y))_{k=1,\dots,n}$ are computed for each given $y$ by the Galerkin method.
In contrast, polynomial based methods such as in \cite{CDS1,CDS2} produce an explicit representation
 \eqref{generallowrank} where the functions $y\mapsto \phi_k(y)$ are multivariate polynomials. More generally,
 low-rank tensor methods \cite{KS} aim at directly building an approximation \eqref{generallowrank}
 with functions $v_k$ and $\phi_k$ constructed depending on $u$.

The efficiency of such approaches depends crucially on the size of $n = n(\varepsilon)$, which should not increase too rapidly 
with decreasing error tolerance $\varepsilon$ in the approximation \eqref{generallowrank}. A crucial question is how one can obtain bounds 
for $n(\varepsilon)$ in terms of $\varepsilon$ that are not crude overestimates, but indeed reflect the minimal required number 
of terms in an expansion of the form \eqref{generallowrank}. Such bounds can then give an indication to what extent the considered 
problem is amenable to such approximations.

First upper bounds can be obtained by placing restrictions on the choice of $\phi_k$.
A fairly well-studied instance are \emph{sparse polynomial expansions}. In this case, the functions $\phi_k$ are selected 
from an a priori chosen basis of tensor product polynomials, for instance 
\be\label{legendreexpansion}
    u_n(x,y) :=  \sum_{\nu \in \Lambda_n} u_\nu(x) \, L_\nu(y)   \,,
\ee
where each $L_\nu(y)=\prod_{i=1}^dL_{\nu_i}(y_i)$ is a tensor product of univariate Legendre polynomials of degrees $\nu_i$ in the variable $y_i$, and 
$\Lambda_n$ is a suitable subset of such multi-indices $\nu=(\nu_i)_{i=1,\dots,d}$ with $\#(\Lambda_n)=n$.
In this framework, one can also treat infinite-dimensional $U$, for instance $y \in [-1,1]^\N$, provided that the problem is anisotropic in the sense that the 
$y_i$ have decreasing influence as $i\to \infty$. We refer to \cite{CDS1,CDS2} where
approximations of the basic form \eqref{generallowrank} with provable convergence rates for the error
$\|u-u_n\|_{L^\infty(U,V)}$ as $n\to +\infty$
are derived in the infinite-dimensional framework. These approximations are derived by best $n$-term truncations of 
Legendre or Taylor expansions, see also \cite{BNTT1,BNTT2} for numerical methods in this line of ideas.

The question that we aim to address in this work is whether one can obtain provably more efficient approximations of the form \eqref{generallowrank} when the $\phi_k$ are \emph{not} selected from certain fixed basis functions, but are allowed to vary essentially arbitrarily.  We do not treat the case of infinite-dimensional $y$ here, but rather consider finite-dimensional parameter domains $U$ in an \emph{isotropic} setting, that is, with all parameters $y_i$ carrying equal weight.

When no limitations are imposed a priori on $\phi_k$, the relevant benchmark for reduced models that aim to guarantee uniform accuracy in $V$ for all values $y$ is the Kolmogorov $n$-width of the so-called {\it solution manifold} $u(U) \subset V$, which is defined for $n\in\N$ by
\be\label{nwidth}
 d_n(u(U))_V :=  \operatornamewithlimits{inf\vphantom{p}}_{\dim (W) = n} \; \sup_{v\in u(U)} \;  \operatornamewithlimits{min\vphantom{p}}_{w\in W} \norm{v - w}_V = \operatornamewithlimits{inf\vphantom{p}}_{\dim (W) = n} \; \sup_{y\in U} \;  \operatornamewithlimits{min\vphantom{p}}_{w\in W} \norm{u(y) - w}_V  \,.
\ee
In general, optimal subspaces $W_n$ corresponding to the infimum in \eqref{nwidth} are not easily determined, although greedy methods under certain assumptions provide reduced basis spaces for which the approximation errors have comparable rates of decay, see \cite{BCDDPW,DPW}.

A more accessible notion of low-rank approximation results from replacing $L^\infty(U, V)$ by $L^2(U,V)$, considering instead the quantities
\be\label{l2width}
\delta_n(u,\mu)_V^2:=\operatornamewithlimits{inf\vphantom{p}}_{\dim (W) = n} \; \int_U \;  \operatornamewithlimits{min\vphantom{p}}_{w\in W} \norm{u(y) - w}^2_V  \,d\mu(y)   \,,
\ee
where $\mu$ is a given probability measure on $U$. The subspaces for which the infimum 
is attained can in this case be characterized: $u\in L^2(U,V,\mu)$ induces a Hilbert-Schmidt operator 
from $L^2(U,\mu)$ to $V$ defined by
\be 
    \mathcal{M}_u: \varphi \mapsto \int_U u(y) \, \varphi(y)\, d\mu(y), 
\ee
with singular value decomposition
\be \label{Musvd}
  \mathcal{M}_u = \sum_{k=1}^\infty \sigma_k v_k \<  \cdot,\phi_k \>_{L^2(U,\mu)},  
\ee
where $\sigma:=(\sigma_k)_{k\geq 1} \in \ell^2(\N)$ is nonnegative and nonincreasing, 
and $(v_k)_{k\geq 1}$, $(\phi_k)_{k\geq 1}$ are orthonormal bases of $V$ and $ L^2(U,\mu)$, respectively. 
It is well known that for given $n$, the infimum in \eqref{l2width} is attained by choosing $W_n := \{  v_k \colon k = 1,\ldots,n\}$,
and has value
\be
\delta_n(u,\mu)_V^2 =\sum_{k>n} \sigma_k^2.
\label{deltasigma}
\ee
The operator $\mathcal{M}_u$ also yields a precise notion of rank for $u \in L^2(U, V)$ via
\be
  \rank(u) := \dim \range(\mathcal{M}_u) = \#\{ k\colon \sigma_k \neq 0 \} \,,
\ee
which, in general, is infinite. Note that $\mu$ may in principle be {\it any} probability measure. However, a
natural choice in the case where the $y_j$ are random variables is to take for $\mu$ the measure associated
to the distribution of $y$, since the quantity which is minimized in \iref{l2width} may then also be viewed as the
quadratic error $\E\bigl(\|u(y)-P_W u(y)\|_V^2 \bigr)$. When the $y_i$ are deterministic, a standard choice is to take 
for $\mu$ the uniform probability measure over $U$.

Note that since $\mu$ is a probability measure, we always have
\be
\delta_n(u,\mu)_V\leq d_n(u(U))_V.
\label{deltad}
\ee
On the other hand, polynomial approximation results such as in \cite{CDS1,CDS2}
yield estimates
for these quantities since obviously
\be
d_n(u(U))_V \leq \|u-u_n\|_{L^\infty(U,V)}\,,
\label{dnleg}
\ee
where $u_n$ is an $n$-term Legendre expansion of the form \iref{legendreexpansion}.

The results in this paper construct estimates for both \eqref{nwidth} and \eqref{l2width}
which are significantly sharper than those which could be obtained using the above estimate \iref{dnleg}.
The particular class of problems that we focus on are parametric diffusion equations
\be\label{strongform}
   -\divergence_x \bigl( a(x,y) \nabla u(x,y)  \bigr) = f(x)\,,  \quad x \in D \subset \R^m,\; y \in U := [-1,1]^d,
\ee
with $m,d\in \N$, where the coefficient $a$ has the form
\be \label{affinecoeff}
 a(x,y) = \bar{a}(x) - \sum_{i=1}^d y_i \psi_i(x)   \,.
\ee
Note that if the initial range of $y_i$ is a more general finite interval $I_i$, 
the range $y_i \in [-1,1]$ can always be ensured up to a renormalization of $\psi_i$ and a modification of $\bar a$.
While in most references there is a positive sign after $\bar a(x)$, we use here a negative sign
for later notational convenience. 

Throughout the paper, we assume $a$ to satisfy a \emph{uniform ellipticity assumption}: %
there exist $0< r \leq R$ such that
\begin{equation}  \label{uea}
   0 < r \leq a(x,y) \leq R < \infty  \,, \quad x\in D, \; y\in U .  
\end{equation}
In view of \eqref{affinecoeff}, this entails in particular
\be\label{uea2}  \sum_{i=1}^d \abs{\psi_i(x)} \leq \bar a(x) - r \,,\quad 
  \sum_{i=1}^d \abs{\psi_i(x)} \leq R - \bar a(x) \,,\quad x \in D\,. 
\ee
We consider the equation \eqref{strongform} in its weak form on $V:= H^1_0(D)$,
\be\label{weakform}
   \Bigl (\bar{A}  - \sum_{i=1}^d y_i A_i \Bigr) u(y)  = f \,,
\ee
where $f \in V'$ and $\bar{A}, A_i \colon V \to V'$ are defined by
\be
  \langle \bar A u, v\rangle :=  \int_D \bar a \nabla u \cdot \nabla v \,dx \,,\quad 
 \langle A_i u, v\rangle := \int_D \psi_i \nabla u \cdot \nabla v \,dx \,, \qquad u,v \in V \,.
\ee

\begin{figure}
\centering
\raisebox{.5cm}{
\begin{tikzpicture}[scale=1.1]
\draw[step=1cm,black,thin] (0,0) grid (4,4);
\node at (0.5,0.5) {$D_1$};
\node at (3.5,3.5) {$D_{16}$};
\node at (.5,1.6) {$\vdots$};
\node at (1.5,.5) {$\hdots$};
\node at (1.5,1.6) {$\iddots$};
\node at (2.5,2.6) {$\iddots$};
\node at (2.5,3.5) {$\hdots$};
\node at (3.5,2.6) {$\vdots$};
\end{tikzpicture}} ~~~~~~~ \includegraphics[width=8cm]{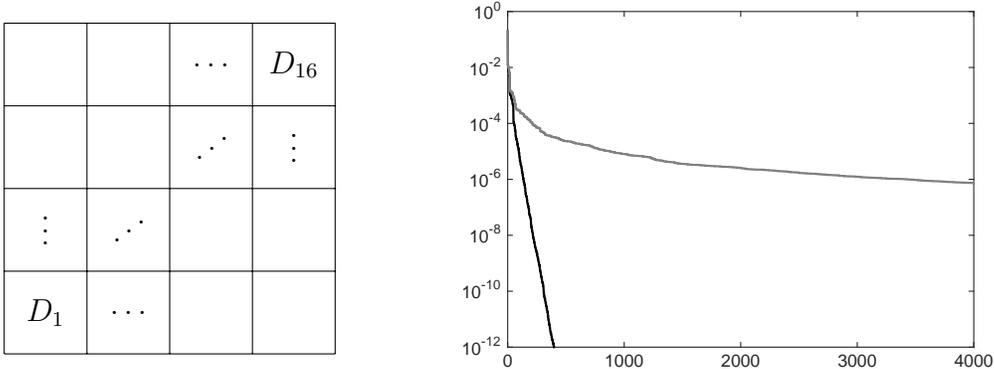}  
\caption{Example geometry of piecewise constant coefficients ($d=16$) and decay of singular values (black) and of ordered Legendre coefficients (gray),
see also Section 6.}
\label{fig:introdecay4x4}
\end{figure}
A simple, yet relevant, model problem that we study in detail -- and where it turns out that general low-rank approximations can indeed perform substantially better than sparse polynomial approximations -- is the case of piecewise constant coefficients $a$, for instance 
\be
\bar{a} = 1 \quad {\rm and }\quad \psi_i = \theta \Chi_{D_i},\;\theta<1,
\ee
where $\{D_1,\dots,D_d\}$ is a given partition of $D$. Such problems arise, for example, in the analysis of 
heat conductivity properties of components composed of several materials. 

As an example of what one may expect in such a case, Figure \ref{fig:introdecay4x4} shows a comparison between 
the numerically observed decay of (i) the singular values $\sigma_k$ as in \eqref{Musvd}
with $\mu$ being the uniform measure in $U$ and (ii) the reordered norms $\|u_\nu\|_V$ of Legendre coefficients in 
\iref{legendreexpansion} where the Legendre polynomials have been normalized in $L^2(U,\mu)$, 
in the case of a checkerboard partition of the unit square $D$ into $16$ smaller squares. 
Whereas the norms of the Legendre coefficients exhibit a subexponential decay, the singular values are observed to decay exponentially.
This indicates that, for this particular case, separable approximations of $u$ using optimally adapted 
$v_k$ and $\phi_k$, for example using reduced basis or POD methods, can perform substantially better than those obtained by best $n$-term truncation 
of Legendre series. This phenomenon was already observed in \cite{KT} and \cite{BG}. 
Here we provide a first theoretical explanation through a rigorous analysis for various situations.
We shall see in particular that this effect is significantly connected to the geometry of the partition.

The rest of our paper is organized as follows: in Section 2, we give two very simple examples for 
which the solution manifold $u(U)$ happens to be contained in a finite dimensional space,
meaning that $d_n(u(U))_V=0$ for $n$ sufficiently large, while polynomial approximations
require that the number of terms $n$ goes to $+\infty$ in order to converge to $u$. We then
consider in Section 3 the general elliptic problem \iref{strongform} with coefficients given by
\iref{affinecoeff}, and we consider its approximation by its truncated power series
\be
u_n(x,y)=\sum_{|\nu| \leq k} t_\nu(x)\, y^\nu,   \quad n:=n(d,k)={k+d\choose d}.
\ee
Our main observation is that under a general condition relating the functions $\bar a$ and $\psi_i$, 
this approximation has rank $r(n)$ significantly smaller than its number of terms $n$. Using the estimate
\be
d_{r(n)}(u(U))_V \leq \|u-u_n\|_{L^\infty(U,V)}\,,
\ee
this allows us to show that the $n$-widths of $u(U)$ decay significantly faster than the approximation error $\|u-u_n\|_{L^\infty(U,V)}$.
The general condition applies in particular to the case where $\bar a$ is constant and $\psi_i$ is proportional to $\Chi_{D_i}$, which
corresponds to the above mentioned case of piecewise constant coefficients. In Section 4, we specialize on the piecewise
constant situation and show that low-rank approximations can be obtained through the study of 
an auxiliary parametric problem posed on the skeleton of the partition and involving various Steklov-Poincar\'e operators.
Using this approach, we prove the exponential decay of the $n$-widths (which justifies the numerically observed
decay of the singular values) in the simple case of a $2\times 2$ checkerboard. We also demonstrate numerically
that this behaviour strongly depends on the geometry, in the sense that it is lost when considering a partition of $D$ into $4$
quadrilaterals which are not squares. Finally, in Section 5 we discuss the numerical schemes which we use in order to
perform the numerical experiments. These schemes combine non-adaptive polynomial approximations and 
rank reduction by singular value decompositions. The development of numerical schemes which combine rank reduction and
adaptive polynomial approximation, along the lines of \cite{BD}, is the subject of current investigation.

\section{Simple examples of exact low-rank representations}

We first consider two simple examples where explicit low-rank representations
can be given, leading to particularly favorable bounds for the Kolmogorov widths of solution manifolds,
when compared to error bounds for the polynomial approximations.

\begin{example}
A trivial instance of a situation where a sparse polynomial expansion in $y$ gives clearly suboptimal approximations is when all $\psi_i$ are proportional to $\bar{a}$, that is, when $\psi_i:= c_i \bar{a}$ with values $c_i\in \R$ such that \eqref{uea} holds. 
Then we may write $u(y)$ as a scalar multiple of $v_1:=u(0)=\bar{A}^{-1} f$ for all $y\in U$, that is
\be
u(x,y)  = v_1(x)\phi_1(y)\,,\quad \phi_1(y) := \Bigl( 1 - \sum_{i=1}^d c_i y_i  \Bigr)^{-1} \,,
\ee
which shows that $\rank(u) = 1$, or in other words $d_n(u(U))_V = 0$ for $n \geq 1$. However, $u$ has an infinite power series
\be
u(x,y)=\sum_{\nu\in\N^d_0} t_\nu (x) \,y^\nu, \quad t_\nu=\Biggl( \frac {|\nu| !}{\prod_{i=1}^d \nu_i !}\prod_{i=1}^d c_i^{\nu_i}\Biggr) v_1,
\ee
and therefore polynomial approximation error bounds are infinitely less favorable than the $n$-widths.
\end{example}

\begin{example}
We now consider a piecewise constant diffusion coefficient in the case of a one-dimensional interval $D$.
Without loss of generality we set $D=]0,1[$, assume $\{ D_1, \ldots, D_d\}$ to be a partition of $D$ into $d$ subintervals, and set $\bar{a}:= 1$ and $\psi_i := \theta\Chi_{D_i}$ for a $\theta \inÊ]0,1[$.  
In the interior of each $D_i$, we then have $ - (1 + \theta y_i) u'' = f$.
Hence, with any $F$ such that $F'' = f$, for each $y$ and $i$ we obtain
\be u(y)|_{D_i} \in \linspan\{ \Chi_{D_i} ,x\,\Chi_{D_i} , F\,\Chi_{D_i} \}\, . \ee
 This yields an upper bound of $3d$ degrees of freedom for $u(y)$. Since there are $d+1$ continuity conditions (independent of $y$) on $u$ at domain and subinterval boundaries, we have 
 \be
 \rank(u) \leq 2d -1,
\ee
 and therefore $d_{n} (u(U))_V = 0$ for $n \geq 2d-1$, independently of the particular choice of $f$.
 
The same argument still applies for general $\bar{a}$ that is such that $0<r\leq \bar a \leq R$ and $\psi_i:=\theta \bar{a} \Chi_{D_i}$,
for some $\theta \inÊ]0,1[$.  Since the $\psi_i$ have this particular form, on the interior of each $D_i$ we have the equation
\be
   - (1 + \theta y_i) (\bar{a} u' ) ' = f \,,
\ee
which can still be solved by integrating twice as before, and consequently one still has $\rank(u)\leq 2d-1$.
\end{example}

\begin{remark} 
One consequence of the previous example is an estimate of the $n$-widths of the solution 
manifold in a case where the parameter range is infinite dimensional. More precisely,
assume that the diffusion coefficient $a$ ranges in the set
\be
\cA= \bigl\{a \in C^{s}(\overline{D})\colon \|a\|_{C^s} \leq B, \; 0<r\leq a\leq R  \bigr\},
\ee
for some $B>0$, where $C^{s}(\overline{D})$ denotes the usual H\"older space for some $0<s\leq 1$. For any $a\in\cA$, 
there exists a sequence $(a_n)_{n\geq 1}$ of piecewise constant approximations for each uniform partition into $n$ subintervals such that 
\be
\norm{a - a_n}_{L^\infty} \leq B n^{-s}, \quad n\geq 1,
\ee 
and such that we also have $0<r\leq a_n\leq R$. Note that this rate is optimal since it is well known that
$d_n(\cA)_{L^\infty} \sim n^{-s}$. Denoting by $u(a)$ the corresponding solution to the diffusion equation
for this value of $a$, we know, see e.g. \cite{CD2}, that
\be
\norm{u(a) - u(a_n)}_{V} \leq r^{-2}\|f\|_{V'}\norm{a - a_n}_{L^\infty}\leq B r^{-2}\|f\|_{V'}\,n^{-s}.
\ee
We have seen that each $u(a_n)$ is a linear combination of $2n-1$ basis functions \emph{independent} of $a_n$.
It follows that
\be
d_n(u(\cA))_{V}\leq C n^{-s}, \quad n\geq 1.
\ee
In other words, in this particular case, the map $a \mapsto u(a)$ preserves the decay rate $n^{-s}$ between the $n$-widths
of $\cA$ in $L^\infty(D)$ and of $u(\cA)$ in $V=H^1_0(D)$, respectively. 
A result of this form was proved in \cite{CD} for a general holomorphic map
$a\mapsto u(a)$, however with a loss of $1$ in the value of $s$.
\end{remark}

\section{Using power expansions}\label{sec:powerexp}

We return to the general formulation \iref{weakform}.
It has been shown in \cite{CDS2} that under the assumption \eqref{uea}, $u$ has a Taylor series expansion 
\be
  u(y) = \sum_{\nu  \in  \N_0^d}  t_\nu y^\nu \,, \quad t_\nu = \frac{1}{\nu!} \partial_\nu u(0) \,,
\ee
converging unconditionally in $L^\infty(U,V)$. The Taylor coefficients $t_\nu$ can be computed by differentiating
the equation
\be
\(\bar A+\sum_{i=1}^d y_iA_i\) u(y)=f,
\ee
at $y=0$. For $\nu=0$ the null multi-index, we have $t_0 = \bar{A}^{-1} f$ and for other values of $\nu$,
application of the Leibniz rule gives
\be
t_\nu = \sum_{i\colon \nu_i \neq 0} \bar{A}^{-1} A_i\,  t_{\nu - e_i} \,,
\ee
where $e_i$ denotes the Kroenecker vector with $1$ at position $i$. Introducing the abbreviations 
\be 
  B_i := \bar{A}^{-1} A_i   \,, \quad  g:= \bar{A}^{-1}f  \,,
\ee
we find that the partial Taylor sums can be re-expressed in the form
\be\label{neumannpartial}
   u_k(y) := \sum_{\abs{\nu}\leq k} t_\nu y^\nu = \sum_{\ell = 0}^k \(  \sum_{i=1}^d y_i B_i \)^\ell g \,.  
\ee
Thus the Taylor expansion can also be interpreted as a Neumann series,
\be\label{neumannseries}
  u(y) =   \Bigl( I- \sum_{i=1}^d y_i B_i \Bigr)^{-1} \bar{A}^{-1} f  = \sum_{\ell = 0}^\infty \Bigl(  \sum_{i=1}^d y_i B_i \Bigr)^\ell g  \,.
\ee

We now introduce the inner product $\langle \cdot, \cdot \rangle_{\bar{A}} := 
\langle \bar{A}\,\cdot, \cdot\rangle$, and its corresponding norm $\|\cdot\|_{\bar A}$, on $V = H^1_0(D)$, 
which in the case $\bar{a} = 1$ 
coincide with the standard inner product and norm of $V$. More generally, we have the norm equivalence
\be
r \|v\|_V^2 \leq \|v\|_{\bar A}^2\leq R\|v\|_V^2.
\ee
The following proposition yields an estimate for the rate of convergence of the series \eqref{neumannseries}.

\begin{prop}\label{propcontraction}
Under the assumption \eqref{uea}, we have
\be\label{contraction}
    \Bignorm{ \sum_{i=1}^d y_i B_i v }_{\bar{A}}  \leq   \rho  \norm{  v  }_{\bar{A}} \,,\quad v\in V,\;  y \in U ,
\ee
where
\be
\rho:= 1 - \frac{r}{\norm{\bar{a}}_{L^\infty}} <1.
\ee
\end{prop}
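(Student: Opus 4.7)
The plan is to work directly with the $\bar A$-inner product, rewrite the quantity $\langle w,w\rangle_{\bar A}$ for $w = \sum_i y_i B_i v$ using the defining identity $\langle B_i v, \phi\rangle_{\bar A} = \langle A_i v, \phi\rangle$, and then exploit the pointwise bound on $\sum_i |\psi_i|$ provided by the uniform ellipticity assumption \eqref{uea2}.

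Concretely, I would first set $w := \sum_{i=1}^d y_i B_i v$. Using $\bar A B_i = A_i$ and the definitions of $A_i$, I would rewrite
\[
\|w\|_{\bar A}^2 = \langle \bar A w, w\rangle = \Bigl\langle \sum_{i=1}^d y_i A_i v,\, w\Bigr\rangle = \int_D \Bigl(\sum_{i=1}^d y_i \psi_i(x)\Bigr) \nabla v(x)\cdot \nabla w(x)\, dx.
\]
The key step is then a pointwise majorization of the coefficient: since $|y_i|\le 1$, \eqref{uea2} gives
\[
\Bigl|\sum_{i=1}^d y_i \psi_i(x)\Bigr| \leq \sum_{i=1}^d |\psi_i(x)| \leq \bar a(x) - r,
\]
so that $|\sum_i y_i \psi_i(x)|/\bar a(x) \leq 1 - r/\bar a(x) \leq 1 - r/\|\bar a\|_{L^\infty} = \rho$ uniformly in $x$ and $y$.

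Finally, I would factor $\bar a$ out and apply the Cauchy--Schwarz inequality in $L^2(D, \bar a\, dx)$:
\[
\|w\|_{\bar A}^2 \leq \int_D \rho\, \bar a(x)\, |\nabla v(x)|\,|\nabla w(x)|\,dx \leq \rho\, \|v\|_{\bar A}\,\|w\|_{\bar A},
\]
which, upon dividing by $\|w\|_{\bar A}$ (the case $w=0$ being trivial), yields the claim. There is no serious obstacle: the whole argument reduces to the observation that multiplication by the ratio $(\sum_i y_i \psi_i)/\bar a$ is a contraction on $L^2(D,\bar a\, dx)^m$, and this ratio is bounded by $\rho$ thanks to \eqref{uea2}. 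The only point to take some care with is the choice to work with the weighted inner product $\langle \cdot,\cdot\rangle_{\bar A}$ rather than the standard $H^1_0$ inner product, since $\bar A^{-1}$ is selfadjoint and an isometry between the $\bar A$-geometry on $V$ and the dual pairing with $V'$, which makes the estimate clean and sharp.
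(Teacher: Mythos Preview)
Your argument is correct and is essentially identical to the paper's proof: both set the sum equal to a new function, test the defining variational identity against that function, invoke the pointwise bound $\sum_i |\psi_i| \le \bar a - r$ from \eqref{uea2}, and conclude by Cauchy--Schwarz in the $\bar a$-weighted $L^2$ inner product. The only cosmetic difference is that you spell out the division by $\bar a$ and the weighted Cauchy--Schwarz step more explicitly.
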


\begin{proof}
Let $u:= \sum_{i=1}^d y_i B_iv$, then by definition
\be
    \int_D  \bar{a} \nabla u \cdot \nabla w \, dx = \sum_{i = 1}^d y_i \int_D \psi_i \nabla v \cdot \nabla w \, dx \,,\quad w\in V .
\ee
Taking $w = u$, and using the Cauchy-Schwarz inequality, we obtain
\be
  \|u\|_{\bar{A}}^2 \leq   \|u\|_{\bar{A}}\, \|v\|_{\bar{A}} \,   \sup_{x\in D} \frac1{\bar{a}(x)} \sum_{i=1}^d \abs{\psi_i(x)}.
\ee
Using \eqref{uea2} we arrive at the assertion.
\end{proof}

Note that the total number of separable terms in $u_k$ is bounded by  
\be
n=n(d,k) :=  \#\{ \nu \colon \abs{\nu} \leq k\}  =  {{k+d}\choose{d}} \,,
\ee
and we have $\frac{1}{d!} k^d \leq n(d,k)\leq (k+1)^d$. Hence in view of Proposition \ref{propcontraction}, $u_k$ provides an approximation
with error 
\be
  \norm{ u - u_k}_{L^\infty(U,V)} \leq \frac{ \norm{g}_{\bar{A}} }{1-\rho}  \, \rho^{k+1} \,,\quad   \rho := 1 - \frac{r}{\norm{\bar{a}}_\infty},
  \label{approuk}
\ee
and with rank
\be
\rank(u_k)\leq n(d,k) \leq (k+1)^d.
\label{genericbound}
\ee
Setting $k(d,n) := \max\{ k \in \N \colon n(d,k) \leq n \} \geq n^{\frac1d} - 1$, 
we find on the one hand that
\be
\rank(u_{k(d,n)})\leq n,
\ee
and on the other hand that
\be   \bignorm{ u - u_{k(d,n)}}_{L^\infty(U,V)}   \lesssim  \rho^{ k(d,n)+1} \leq e^{- \abs{\ln \rho} \,n^{1/d} }.  \ee
In summary, \eqref{neumannpartial} thus provides the bound
\be  \label{trivialnwidth}
    d_n\bigl(u(U)\bigr)_V  \lesssim  e^{- \abs{\ln \rho} \,n^{1/d} }  
\ee
for the $n$-widths of the solution manifold $u(U)$, where the multiplicative constant depends on $g$ and $\rho$.

As an example of a rather general structural property of the problem that leads to a reduction in the $n$-widths, we consider the additional assumption
\be \label{sumAi}
  \sum_{i=1}^d A_i = \theta \bar{A} \,.
\ee
In our setting, this holds if $\psi_i = \bar{a} \varphi_i$ with the $\varphi_i$ forming a scaled partition of unity, that is, $\sum_{i=1}^d \varphi_i = \theta$ for a $\theta \in ]0,1[$.
The case of piecewise constant $a$ is a particular instance, where $\bar{a}$ is constant and $\psi_i =\bar a  \theta \Chi_{D_i}$ with $\{ D_1,\ldots, D_d\}$ a partition of $D$.

Note that \eqref{sumAi} is equivalent to $\sum_{i=1}^d B_i = \theta\, \id$.
Let us consider first the consequences in the case $d=2$, where this implies
\be  u_k(y) = \sum_{j=0}^k (y_1 B_1 + y_2 B_2)^j g = \sum_{j=0}^k \bigl(\theta y_2  \id + (y_1-y_2) B_1\bigr)^j g  \,.
\ee
We can rewrite each term in the sum as
\be
(y_1 B_1+y_2 B_2)^j  g= (\theta y_2 \,\id +(y_1-y_2) B_1)^j g
= \sum_{\ell=0}^j (\theta y_2)^{j - \ell} {j \choose \ell}(y_1-y_2)^\ell B_1^\ell g.
\ee
Exchanging the order of summations in the sums over $\ell$ and $j$, setting
\be
v_\ell := B_1^\ell g\,,\quad  \phi_{k,\ell} (y):=(y_1-y_2)^\ell \sum_{j=\ell}^k (\theta y_2)^{j - \ell}  {j \choose \ell}
\ee
we thus obtain
\be
 u_k(y)  = \sum_{\ell =0}^k \phi_{k,\ell} (y) \, v_\ell   \,.
\ee
Consequently, under the assumption \eqref{sumAi} the rank of $u_k$ has the linear bound
\be
\rank(u_k)\leq k+1,
\ee
which is obviously more favorable than the quadratic bound given by \iref{genericbound} in the generic case.
The same reasoning can be applied when $d > 2$, leading to the following result.

\begin{prop}\label{prp:generalreduction}
If \eqref{sumAi} holds, for each $k$ there exist $v_\ell \in V$ and $d$-variate polynomials $\phi_\ell$, with $\ell =1,\ldots, n(d-1, k)$, such that
\be
  u_k(y) = \sum_{\ell=1}^{n(d-1, k)} \phi_{k,\ell} (y) \,v_\ell,
\ee
and therefore 
\be
\rank(u_k)\leq n(d-1, k) \leq (k+1)^{d-1}.
\ee
\end{prop}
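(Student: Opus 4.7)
The plan is to mimic the $d=2$ calculation already carried out in the excerpt, but now eliminating the operator $B_d$ instead of $B_2$. The key algebraic identity is that assumption \eqref{sumAi} rewrites as $B_d = \theta\, \id - \sum_{i=1}^{d-1} B_i$, so that for every $y\in U$,
\be
  \sum_{i=1}^d y_i B_i \;=\; \theta y_d\, \id \;+\; \sum_{i=1}^{d-1} (y_i - y_d) B_i.
\ee
Because $\theta y_d\, \id$ is a scalar multiple of the identity, it commutes with the second summand, so the binomial theorem applies to the $j$-th power and produces
\be
  \Bigl(\sum_{i=1}^d y_i B_i\Bigr)^j \;=\; \sum_{\ell=0}^{j} \binom{j}{\ell} (\theta y_d)^{j-\ell} \Bigl(\sum_{i=1}^{d-1}(y_i-y_d)B_i\Bigr)^\ell.
\ee

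Plugging this into \eqref{neumannpartial}, applying to $g$, and exchanging the order of summation over $j$ and $\ell$, I would obtain the representation
\be
  u_k(y) \;=\; \sum_{\ell=0}^{k} c_{k,\ell}(y_d) \, w_\ell(y), \qquad c_{k,\ell}(y_d) := \sum_{j=\ell}^{k} \binom{j}{\ell}(\theta y_d)^{j-\ell},
\ee
where $w_\ell(y) := \bigl(\sum_{i=1}^{d-1}(y_i-y_d)B_i\bigr)^\ell g \in V$. Thus the whole $y_d$-dependence, aside from what is encoded through the differences $y_i-y_d$, is absorbed into the scalar polynomials $c_{k,\ell}$.

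Next I would expand $w_\ell$ by collecting terms according to the multi-indices $\mu\in\N_0^{d-1}$ with $|\mu|=\ell$: noncommutativity of the $B_i$ is irrelevant for the counting argument, because for each such $\mu$ one has
\be
  w_\ell(y) \;=\; \sum_{|\mu|=\ell}\, \prod_{i=1}^{d-1}(y_i-y_d)^{\mu_i} \, v_\mu, \qquad v_\mu := \sum_{\sigma}\, B_{i_{\sigma(1)}}\cdots B_{i_{\sigma(\ell)}} g \in V,
\ee
where the inner sum runs over the ordered index sequences whose occurrence multi-index equals $\mu$. Substituting this into the expression for $u_k(y)$ yields the claimed separable decomposition, with the vectors $v_\mu\in V$ being independent of $y$, and the coefficient functions being the $d$-variate polynomials $c_{k,|\mu|}(y_d)\prod_{i<d}(y_i-y_d)^{\mu_i}$.

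Finally, the rank bound follows by counting: the indexing set is $\{\mu\in\N_0^{d-1} : |\mu|\leq k\}$, whose cardinality is exactly $\binom{k+d-1}{d-1}=n(d-1,k)$, and $n(d-1,k)\leq (k+1)^{d-1}$ is elementary. The main (minor) pitfall is being careful that the $B_i$'s do not commute: this prevents one from applying the multinomial theorem naively on $w_\ell$, but since only the existence of fixed vectors $v_\mu\in V$ matters for the rank estimate, grouping by multi-index as above circumvents the issue.
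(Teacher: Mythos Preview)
Your proposal is correct and follows essentially the same approach as the paper: eliminate $B_d$ via \eqref{sumAi}, apply the binomial theorem using that $\theta y_d\,\id$ commutes with everything, swap the sums over $j$ and $\ell$, and then expand the $(d-1)$-fold operator power by grouping according to multi-indices in $\N_0^{d-1}$. Your $c_{k,\ell}$ and $v_\mu$ are exactly the paper's $\gamma_{k,\ell}$ and $\tilde B^{(|\kappa|)}_\kappa g$, and your explicit remark about noncommutativity of the $B_i$ is a welcome clarification.
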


\begin{proof}
Analogously to the above considerations for $d=2$, we obtain
\be\label{uKn}
  \sum_{j=0}^k \Bigl(  \sum_{i=1}^d y_i B_i  \Bigr)^j   =  \sum_{\ell=0}^k  \gamma_{k,\ell}(y_d)  \Bigl(\sum_{i =1}^{d-1} (y_i - y_d) B_i  \Bigr)^\ell\,,\quad \gamma_{k,\ell}(y_d) :=   \sum_{j = \ell}^k {j \choose \ell} (\theta y_d)^{j - \ell}  \,.
\ee
For notational convenience, we define the operators $\tilde B^{(\ell)}_{\kappa}$ for multi-indices $\kappa = (\kappa_1,\ldots, \kappa_{d-1})$ 
such that
\be
   (y_1 B_1 + \ldots + y_{d-1} B_{d-1})^\ell = \sum_{\abs{\kappa} = \ell}  y^\kappa \tilde B^{(\ell)}_{\kappa}  \,, \quad y^\kappa := y_1^{\kappa_1} \cdots y_{d-1}^{\kappa_{d-1}}\,.
\ee
Note that the $\tilde B^{(\ell)}_{\kappa}$ are thus precisely the sums of powers of $B_i$, $i=1,\ldots,d-1$, that arise in a Taylor expansion of $u(y)$ with respect to $y_1,\ldots, y_{d-1}$.
Using this to rewrite the right hand side in \eqref{uKn}, we arrive at
\be 
  u_k(y) = \sum_{\abs{\kappa} \leq k}    \gamma_{k, \abs{\kappa}}(y_d) \, y^\kappa  \, \tilde B^{(\abs{\kappa})}_\kappa g \,.
\ee
Let $\{ \kappa_\ell \}_{\ell=1,\ldots,\hat n(d-1, k)}$ be an enumeration of $\{ \abs{\kappa} \leq k \}$. We then obtain the assertion by choosing
\bex
   v_\ell :=  \tilde B^{(\abs{\kappa_\ell})}_{\kappa_\ell} g \,,\quad  \phi_{k,\ell} (y) :=     \gamma_{k, \abs{\kappa_\ell}}(y_d) \, y^{\kappa_\ell} \,. \qedhere
\eex
\end{proof}

As can be seen from the proof, the functions $v_k$ in Proposition \ref{prp:generalreduction} are precisely the Taylor coefficients $t_{(\kappa_1,\ldots,\kappa_{d-1},0)}$ of $u(y)$. As a consequence of the proposition, under the additional assumption \eqref{sumAi}, the generic estimate \eqref{trivialnwidth} improves to
\be  
    d_n\bigl(u(U)\bigr)_V  \lesssim  e^{- \abs{\ln \rho} \, n^{1/(d-1)} }    \,.
    \label{improvedtrivialnwidth}
\ee
Note that in view of \iref{deltasigma} and \iref{deltad}, this estimate implies that the singular values in \iref{Musvd} satisfy
\be
\sum_{k>n} \sigma_k^2 \lesssim e^{- 2\abs{\ln \rho} \, n^{1/(d-1)} }
\ee
and in particular
\be
\sigma_n  \lesssim  e^{- \abs{\ln \rho} \, n^{1/(d-1)} }.
\label{subexpsing}
\ee
In these estimates, the multiplicative constants depend on $g$ and $\rho$, similar to \eqref{trivialnwidth}.

\section{Using the geometry of the partition}\label{sec:geom}

As noted above, the assumption \eqref{sumAi} can hold under more general conditions than a piecewise constant coefficient $a$. The reduction in the
bound for the $n$-widths between \iref{trivialnwidth} and  \iref{improvedtrivialnwidth}, however, weakens rapidly as $d$ increases. In particular, the number of terms required to meet a certain error bound still grows exponentially in $d$. In order to obtain stronger, but also more specialized results, 
we now focus on certain parametric forms of $a$ which are described through a partition of $D$.

\subsection{Steklov-Poincar\'e operators}

For given $\bar{a} \in L^\infty(D)$ with $\essinf \bar{a} >0$, we consider $\psi_j := \theta \Chi_{D_j} \bar{a}$ with a fixed $\theta \in ]0,1[$, where for the moment, $\{D_i\} $ is a partition of $D$ into general Lipschitz subdomains. 
We denote the skeleton of this partition by
\be
  \Gamma := \bigcup_{i=1}^d \partial D_i \setminus \partial D  
\ee
and define $V_\Gamma$ as the space of trace values of functions in $V$ on $\Gamma$, also denoted by 
$V_\Gamma:= H^{\frac12}_{00}(\Gamma)$, which we equip with the norm
\be
\|w\|_{V_\Gamma}:= \min\bigl\{ \|v\|_{\bar A}\colon v\in V, \; v_{|\Gamma}=w \bigr\}.
\ee
We also introduce for each $i=1,\dots,d$ the space $V_i := H^1_0(D_i)$ viewed as a closed subspace
of $V=H^1_0(D)$ by regarding its elements as extended by zero to all of $D$. The direct sum $V_1\oplus\cdots \oplus V_d$
is a closed subspace of $V$, and we denote by $W$ its $\bar{A}$-orthogonal complement in $V$.

Furthermore, we introduce for each $i=1,\dots,d$ the $\bar A$-harmonic extension operator $E_i$ from $V_\Gamma$ to the space
\be
H^1(D_i)\cap H^1_0(D):=\{ v\in H^1(D_i) \colon v|_{\partial D} = 0\},
\ee
which maps the trace value $v_\Gamma \in V_\Gamma$ to the extension $E_i v_\Gamma$  defined by 
\be
  \int_{D_i} \bar{a} \nabla E_i v_\Gamma \cdot \nabla w\,dx = 0 \quad\text{for all $w\in V_i$}, \qquad E_i v|_{\partial D_i \cap \partial D} = 0,\;  E_i v|_{\partial D_i\cap\Gamma} = v_\Gamma |_{\partial D_i} \,.
\ee
In the case where $\bar a$ is constant, this is the usual harmonic extension. We define the operator $E$ from $V_\Gamma$ to $V$ that 
concatenates these extensions into a function defined on $D$ by
\be  E\colon  v_\Gamma \mapsto E v_\Gamma \,,\qquad E v_\Gamma|_{D_i} := E_i v_\Gamma ,\; i=1,\ldots, d\,. \ee
Note that the space $W$ coincides with the range of $E$.

The operators $B_i=\bar{A}^{-1}A_i$ are self-adjoint in the $\bar{A}$-inner product, and for each $i$ they have the properties
\be \label{Binvariance}
  B_i|_{V_i} = \id, \qquad B_i|_{V_j} = 0 \;\text{ for $j\neq i$.}
\ee
Consequently, these operators also map $W$ to itself. 

We now decompose $u(y)$ into the sum of its $\bar A$-orthogonal projections onto the $V_i$ and $W$, according to
\be\label{orthdecomp}
  u (y)=  u_W(y) + \sum_{i=1}^d u_i(y) \,.
\ee
The component $u_i(y) \in V_i$ solves
\be \label{uivarform}
   (1-\theta y_i)\int_{D_i} \bar{a} \nabla u_i(y) \cdot\nabla v \,dx = \int_{D_i} f \,v\,dx \,,\; v \in V_i, \quad\text{and}\quad u_i(y) |_{D\setminus D_i} = 0 \,. 
\ee
Each $u_i$ is therefore of rank at most $1$, depending only on $y_i$, and $\sum_{i=1}^d u_i$ therefore has rank at most $d$.

The component $u_W$ can also be written in the form $u_W(y)=E u_\Gamma(y)$ for a $u_\Gamma(y) \in V_\Gamma$, 
where $u_\Gamma(y)$ is characterized by the variational problem
\begin{equation}\label{eq:complement}
 \sum_{i=1}^d (1 - \theta y_i) \int_{D_i} \bar{a} \nabla E u_\Gamma(y) \cdot \nabla E v_\Gamma \,dx 
   = \int_D f \,E v_\Gamma \, dx \,,\quad v_\Gamma \in V_\Gamma  \,.
\end{equation}
To obtain a more concise formulation of the equation for $u_\Gamma$, we introduce the Steklov-Poincar\'e operators $S_i\colon V_\Gamma \to V'_\Gamma$, $i= 1,\ldots,d$, and $\bar{S}\colon V_\Gamma \to V_\Gamma'$ defined by
\be   \langle S_i v_\Gamma, w_\Gamma\rangle := \int_{D_i} \bar{a} \nabla E v_\Gamma\cdot \nabla E w_\Gamma\,dx, \quad v_\Gamma, w_\Gamma\in V_\Gamma,
\ee
and 
\be 
\bar{S} := \sum_{i=1}^d S_i \,.
\ee
The norm of $V_\Gamma$ is induced by the inner product
  \be ( u, v)_\Gamma := \langle \bar{S} u,v\rangle  = \int_D \bar{a} \nabla Eu \cdot \nabla E v\,dx\,. \ee
In particular, the operator $\bar{S}$ is bounded, elliptic, and defines an isometry between $V_\Gamma$ and $V'_\Gamma$.

Using the operators $S_i$ and $\bar{S}$, we can now rewrite \eqref{eq:complement} as
 \be  \Bigl(\bar{S} - \theta \sum_{i=1}^d y_i S_i\Bigr) u_\Gamma(y) =f_\Gamma  \,,  \ee
where $f_\Gamma \in V_\Gamma'$ is defined by 
\be     
     \langle f_\Gamma , v_\Gamma \rangle := \int_D f \, E v_\Gamma \,dx  \,,\quad v_\Gamma \in V_\Gamma \,. 
\ee
We thus obtain the Neumann series representation
\be\label{neumannpartialif}
  u_{\Gamma} = \lim_{k\to \infty} u_{k,\Gamma}  , \quad \quad  u_{k,\Gamma}(y) := \sum_{\ell = 0}^k  \Bigl( \sum_{i=1}^d y_i \,(\theta \bar{S}^{-1} S_i)  \Bigr)^\ell g_\Gamma, 
   \ee
   where $g_\Gamma:=\bar S^{-1}f_\Gamma$. By a similar argument as in the proof of Proposition \ref{propcontraction}, we obtain that
   \be\label{contractiongamma}
    \Bignorm{ \sum_{i=1}^d y_i  \theta \bar{S}^{-1}S_i v }_{V_\Gamma}  \leq    \theta \norm{  v  }_{V_\Gamma} \,,\quad v\in V_\Gamma, \;y \in U,
\ee
which shows that the Neumann series converges in $L^\infty(U, V_\Gamma)$.

Note that the Neumann series for $u$ can be rewritten as
\be
u_k(y)=Eu_{k,\Gamma}(y) +\sum_{i=1}^d u_{k,i}(y),
\ee
where $u_{k,i}(y)$ is the $\bar A$-orthogonal projection of $u_k(y)$ onto $V_i$. Similar to $u_i$, the rank of $u_{k,i}$ is at most $1$. Therefore
\be\label{rankif}
\rank(u_k)  \leq d + \rank(u_{k,\Gamma}) \,.
\ee
The task of obtaining favorable bounds for the rank of $u_k$
is thus in the present setting essentially reduced to obtaining similar such bounds for
the rank of $u_{k,\Gamma}$, which is defined on $\Gamma\times U$. 
We shall see next that what one obtains in this regard can depend 
strongly on the particular geometry of $D$ and $\Gamma$.

\subsection{A particular example: four squares}\label{sec:2x2}

We now turn to the case of piecewise constant coefficients, with each piece varying within the same range of values, where we may assume without loss of generality that $\bar{a} = 1$. 
We consider the case of the unit square $D = ]-\frac12, \frac12[^2$, partitioned into the four symmetric quadrants,
\be \label{part2x2}
 \textstyle   D_1 = ]-\frac12, 0[^2,\quad D_2 = ]0, \frac12[\times ]-\frac12,0[,\quad D_3 =]-\frac12 , 0[\times ]0, \frac12[,\quad D_4 = ]0, \frac12 [^2  \,,  
\ee
and with $\psi_i := \theta \Chi_{D_i}$, $\theta \in ]0,1[$.

\begin{figure}
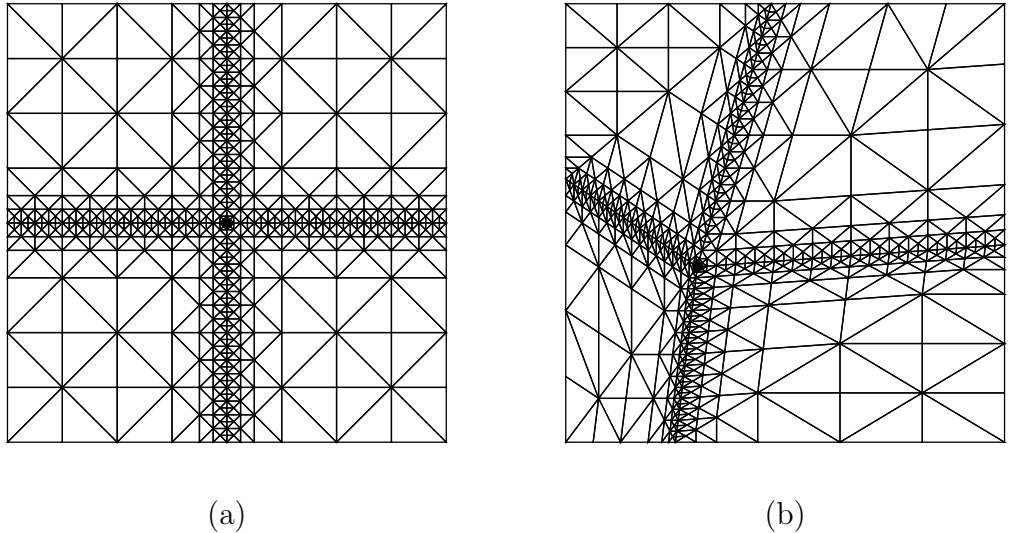
\centering
\begin{tabular}{cc}
\includegraphics[width=7cm]{mesh2x2} & \includegraphics[width=7cm]{mesh2x2skew} \\
 (a)  &  (b)
\end{tabular} 
\caption{Example meshes for the two test geometries with 992 triangles and 521 vertices. The meshes used in the actual computation have 119\,272 triangles and 59\,909 vertices with a similar gradation towards subdomain boundaries.}\label{fig:meshes}
\end{figure}

In order to motivate what follows, we first consider a numerical experiment, where we compare the decay
 of the $V$-norms of the tensor product Legendre coefficients $u_\nu$ as in \eqref{legendreexpansion} 
 to the decay of singular values $\sigma_k$ in the decomposition \eqref{Musvd}. We execute these computations
 under a {\it fixed} space discretization through a finite element space $V_h$: the exact solution map $y\mapsto u(y)$
 is replaced by $y\mapsto u_h(y)$, where $u_h(y)$ is the Galerkin projection of $u(y)$ onto the space $V_h$. 
 We compare the results obtained for (a) the symmetric partition \eqref{part2x2} to those for (b) a non-symmetric
partition into four quadrilaterals which may be viewed as a distorted version of \eqref{part2x2}. As explained further in Section 5, 
 in order to obtain result that reflect the behavior of the Legendre coefficients and singular values for the
 original solution map $y\mapsto u(y)$, it is extremely important to resolve the subdomain interfaces 
 appropriately in the spatial discretization. Figure \ref{fig:meshes} displays the typical meshes that
 we use for the symmetric partition \eqref{part2x2} and the distorted partition.
 We employ piecewise linear finite elements on further refined meshes, which gives 59\,365 degrees of freedom, 4\,105 of these on subdomain interfaces.
For the discretization in the parametric variable, we use Galerkin projection onto tensor product Legendre polynomials of total degree
at most 15. We therefore compute ${19 \choose 4}=3876$ Legendre coefficients $u_{h,\nu}$, each of which is a finite 
element function from $V_h$. A detailed discussion of the precise computational procedure is deferred to Section \ref{sec:numerics}.
The given results are obtained with the choices $f := 1$ and $\theta := \frac12$.

In Figure \ref{fig:decay2x2}, we compare the decay of the $V$-norms of the Legendre coefficients $u_{h,\nu}$ 
to the decay of singular values $\sigma_k$ in the decomposition \eqref{Musvd} 
where we have replaced $V$ by $V_h$ and $u(y)$ by $\sum_{|\nu|\leq 15} u_{h,\nu} L_\nu(y)$ in the definition of $\cM_u$.
In both cases (a) and (b), the second sequence decays substantially faster. However, although the Legendre coefficients behave almost identically in the two cases, we observe an exponential decrease of the $\sigma_k$ in case (a), but only a subexponential decrease in case (b).
A closer inspection shows that the decay in case (b) is in fact not substantially better than guaranteed by \iref{subexpsing}.

As explained in Section 5, the Galerkin discretization in the Legendre basis up to some total order $\ell$ allows us to
exactly compute the truncated power series $u_{h,\ell}(y)=\sum_{|\nu|\leq \ell} t_{h,\nu}y^\nu$ associated to the map $y\mapsto u_h(y)$.
Figure \ref{fig:itersvs2x2} displays the ranks of the partial sums $u_{h,k}$ for $k=1,\ldots, 10$ 
together with the corresponding singular values.  In case (a), these ranks are observed to increase approximately linearly in $k$ before the smallest non-zero singular values drop below machine precision, whereas the ranks grow more rapidly (faster than quadratically) in case (b).
Further experiments reveal that there is no qualitative change in the results if one prescribes a less regular right hand side $f$,
which shows that the decay of singular values is not tied to the spatial smoothness of $u$.

\begin{figure}
\begin{tabular}{cc}
\hspace{-9pt}\includegraphics[width=8cm]{norms} & \includegraphics[width=8cm]{svs} \\
\footnotesize Sorted $H^1$-norms of Legendre coefficients  &  \footnotesize Singular values
\end{tabular} 
\caption{Decay of sorted $H^1$-norms of Legendre coefficients and of singular values, both for geometries (a) and (b) in Figure \ref{fig:meshes} (marked by $\times$ and $\textcolor{gray}{\circ}$, respectively).}
\label{fig:decay2x2}
\end{figure}

\begin{figure}
\begin{tabular}{cc}
\hspace{-9pt}\includegraphics[width=8cm]{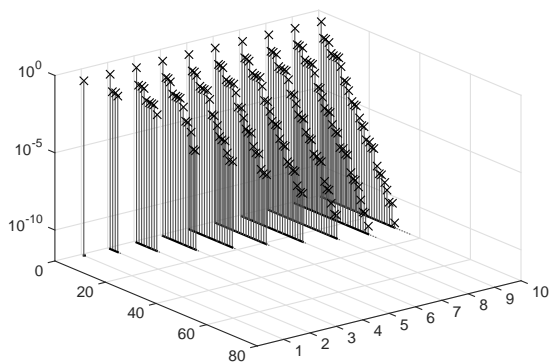} & \includegraphics[width=8cm]{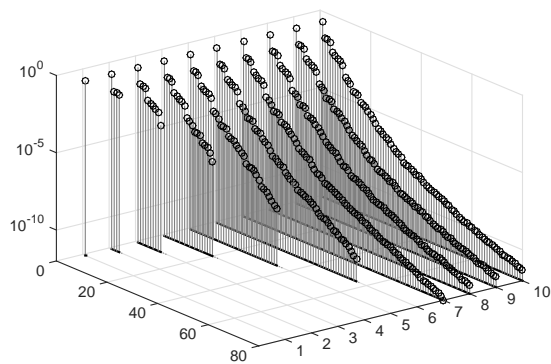} \\
\footnotesize (a)  &  \footnotesize (b)
\end{tabular} 
\caption{Ranks and first 80 singular values of the partial sums of the Neumann series for orders $k=1,\ldots,10$, and for geometries (a) and (b) in Figure \ref{fig:meshes} (marked by $\times$ and $\textcolor{gray}{\circ}$, respectively).}\label{fig:itersvs2x2}
\end{figure}

Our aim is now to explain the substantially better low-rank approximability observed in the case of a symmetric geometry.
The remainder of this section is devoted to the proof of the following theorem, which requires several preparatory steps.
Note that under our present assumptions, $\norm{\cdot}_{\bar A} $ equals the standard norm on $H^1_0(D)$.

\begin{theorem}\label{thm:rankest2x2}
Let $d=4$, and let $D_i$, $i=1,\ldots,4$, be defined as in \eqref{part2x2}. For each $k\in \N$, and for $n(k):= 8k+5$, there exist $v_1,\ldots,v_{n(k)} \in V$ and $d$-variate polynomials $\phi_1,\ldots,\phi_{n(k)}$, each of
total degree $k$, such that the truncated power series $u_k(y)=\sum_{\abs{\nu} \leq k} t_\nu y^\nu$ has the exact expression
\be
u_k(y)=\sum_{\ell =1}^{n(k)} \phi_\ell(y) \, v_\ell.
\ee
Therefore $\rank(u_k)\leq 8k+5$.
\end{theorem}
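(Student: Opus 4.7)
The plan is to reduce the rank estimate for $u_k$ to one for the truncated skeleton solution $u_{\Gamma,k}$ and then exploit the full $D_4$ symmetry of the $2\times 2$ partition, which goes beyond the $\Z_2\times\Z_2$ reflection symmetry used in Proposition \ref{prp:generalreduction}.

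First, I would apply the decomposition $u(y) = E u_\Gamma(y) + \sum_{i=1}^4 u_i(y)$ from Section 4.1. With $\bar a = 1$ and $\psi_i = \theta\chi_{D_i}$, equation \eqref{uivarform} yields the explicit form $u_i(y) = (1-\theta y_i)^{-1} w_i$, where $w_i \in V_i$ is the fixed solution of $-\Delta w_i = f$ on $D_i$ with zero boundary. Expanding $(1-\theta y_i)^{-1} = \sum_{j\ge 0}(\theta y_i)^j$ and truncating at total degree $k$ produces four rank-one contributions of the form $w_i \sum_{j=0}^k(\theta y_i)^j$. Since $E$ is $y$-independent, it then remains to prove $\rank(u_{\Gamma, k}) \leq 8k+1$.

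Second, I would recast $u_{\Gamma, k}(y) = \sum_{\ell=0}^k (\theta\sum_i y_i T_i)^\ell g_\Gamma$ in the symmetry-adapted basis $\hat T_{++} = \sum T_i = I$, $\hat T_{+-} = T_1+T_2-T_3-T_4$, $\hat T_{-+} = T_1-T_2+T_3-T_4$, $\hat T_{--} = T_1-T_2-T_3+T_4$. Writing $\theta\sum_i y_i T_i = \alpha I + M$ with $\alpha = \theta(y_1+y_2+y_3+y_4)/4$ and $M = \beta\hat T_{+-} + \gamma\hat T_{-+} + \delta\hat T_{--}$, and using that $\alpha I$ commutes with $M$, one gets $u_{\Gamma, k}(y) = \sum_{j=0}^k p_{k,j}(y)\, M(y)^j g_\Gamma$ for polynomials $p_{k,j}$ of degree $\leq k$. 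It then suffices to bound the rank contribution of the family $\{M(y)^j g_\Gamma\}_{j=1,\ldots,k}$ by $8k$.

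Third, and crucially, I would use the $90^\circ$-rotation identities $r\hat T_{+-} r^{-1} = -\hat T_{-+}$, $r\hat T_{-+} r^{-1} = \hat T_{+-}$, $r\hat T_{--} r^{-1} = -\hat T_{--}$, together with the block structure $V_\Gamma = V^{++} \oplus V^{+-}\oplus V^{-+}\oplus V^{--}$, in which each $\hat T_{ab}$ maps $V^{\alpha\beta}$ into $V^{a\alpha,b\beta}$. These relations collapse the $3^j$ a priori distinct words in $\hat T_{+-},\hat T_{-+},\hat T_{--}$ applied to $g_\Gamma$ down to an $O(j)$-dimensional span, and a careful bookkeeping per parity block yields at most $8$ new basis vectors at each order $j\geq 1$ (the constant $8$ being the order of $D_4$). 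Combined with the initial term $g_\Gamma$ at $j=0$, this gives $\rank(u_{\Gamma, k}) \leq 8k+1$, and hence $\rank(u_k) \leq 8k+5$.

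The main obstacle is the third step: carrying out the explicit operator-algebraic computation that shows that the rotation identities, coupled with the parity-block structure, actually reduce the dimension growth from the $O(k^3)$ bound of Proposition \ref{prp:generalreduction} to linear in $k$. This feature is specific to the fully symmetric geometry: breaking the rotational symmetry (as in Figure \ref{fig:meshes}(b)) destroys the relevant cancellations, consistent with the only subexponential decay of singular values observed there and the general bound \eqref{subexpsing}.
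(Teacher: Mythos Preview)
Your first two steps are correct and coincide with the paper's setup: the splitting $u_k = Eu_{k,\Gamma} + \sum_i u_{k,i}$ reduces the problem to $\rank(u_{k,\Gamma})\le 8k+1$, and the change to symmetry-adapted operators $\hat T_{++}=I$, $\hat T_{+-},\hat T_{-+},\hat T_{--}$ is exactly the paper's introduction of $G_0=\theta I,G_2,G_3,G_1$. Your parity-block claim is also right, with one remark: on the skeleton one has $V^{--}=\{0\}$ (a function odd under both reflections must vanish on both $\Gamma_1$ and $\Gamma_2$), so your four blocks collapse to the paper's three spaces $\hat V_1=V^{++}$, $\hat V_2=V^{-+}$, $\hat V_3=V^{+-}$. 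With this identification your rule ``$\hat T_{ab}:V^{\alpha\beta}\to V^{a\alpha,b\beta}$'' is precisely Lemmas~\ref{lmm:zero} and~\ref{lmm:one}.

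The gap is in your third step. The block structure alone only reduces the count of nonzero words of length $j$ from $3^j$ to $2^j$ (from each block, two of the three operators are nonzero), and your rotation identities $r\hat T_{+-}r^{-1}=-\hat T_{-+}$, $r\hat T_{--}r^{-1}=-\hat T_{--}$ only relate a word $W$ applied to $g_\Gamma$ to another word $W'$ applied to $r g_\Gamma$; since $r g_\Gamma\neq g_\Gamma$ for generic $f$, this does not collapse the span. In particular, conjugation by $r$ shows that the two identities $G_3^2|_{\hat V_2}=\theta^2 I$ and $G_2^2|_{\hat V_3}=\theta^2 I$ are \emph{equivalent}, but proves neither of them. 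The heuristic ``$8=|D_4|$ new vectors per order'' is not an argument: orbits under $D_4$ have at most $8$ elements, but the family $\{M(y)^j g_\Gamma\}$ is not a single orbit.

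What the paper actually uses to get linear growth is a set of \emph{multiplicative} identities (Lemma~\ref{lmm:two}) that shorten words: for $v_2\in\hat V_2$ one has, up to powers of $\theta$, $G_2G_3v_2=G_1v_2$ and $G_3^2v_2=v_2$, and symmetrically on $\hat V_3$. These are not consequences of the $D_4$ group action but of the \emph{locality} of the Steklov--Poincar\'e operators: since each $S_i$ only sees boundary data on $\partial D_i$, multiplying a test function $u$ by $\chi_{\{x_1\le 0\}}-\chi_{\{x_1>0\}}$ converts $\langle \bar S\,\cdot,u\rangle$ into $\langle H_3\,\cdot,u\rangle$ on the appropriate parity block. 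This ``sign--flip'' trick is the missing ingredient; once it is in place, every nonzero word in $G_1,G_2,G_3$ applied to $g_i$ reduces to one of the forms $G_1^\ell g_i$, $G_2G_1^\ell g_i$, $G_3G_1^\ell g_i$ (Lemma~\ref{lmm:graph}), which gives the linear count and the bound $8k+1$.
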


Combining this result with the approximation estimate \iref{approuk} between $u$ and $u_k$
immediately yields the following corollary.

\begin{cor}
With the same notations as in Theorem \ref{thm:rankest2x2}, we have
\be
\sup_{y\in U } \,\Bignorm{u(y) - \sum_{\ell =1}^{n(k)} \phi_\ell(y) \, v_\ell }_{V} \leq  \frac{ \norm{\bar{A}^{-1}f}_V }{1 - \theta} \, \theta^{k+1}\,,
\ee
and consequently $d_n\bigl(u(U) \bigr)_V \lesssim \exp({-\frac{\abs{\ln \theta}}8 n})$.
\end{cor}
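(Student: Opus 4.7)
The plan is to deduce the corollary by combining Theorem \ref{thm:rankest2x2}, which provides an \emph{exact} representation $u_k(y)=\sum_{\ell=1}^{n(k)}\phi_\ell(y)v_\ell$, with the general truncation estimate \iref{approuk} specialized to the present setting. The first displayed inequality is essentially a matter of evaluating the constants $\rho$ and $\|g\|_{\bar A}$; the $n$-width bound then follows by optimizing $k$ as a function of $n$.

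First, since Theorem \ref{thm:rankest2x2} gives $u_k(y)=\sum_{\ell=1}^{n(k)}\phi_\ell(y)v_\ell$ with equality, I rewrite
$$\sup_{y\in U}\Bignorm{u(y)-\sum_{\ell=1}^{n(k)}\phi_\ell(y)v_\ell}_V = \|u-u_k\|_{L^\infty(U,V)},$$
and apply \iref{approuk}. I need to compute the constants for the Section \ref{sec:2x2} setting where $\bar a\equiv 1$ and $\psi_i=\theta\Chi_{D_i}$ on a partition $\{D_i\}$ of $D$. Here $\|\bar a\|_\infty=1$, so the norm $\|\cdot\|_{\bar A}$ coincides with the $V$-norm and $\|g\|_{\bar A}=\|\bar A^{-1}f\|_V$. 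Moreover, the partition property gives $\sum_i|\psi_i(x)|=\theta$, so \iref{uea}--\iref{uea2} hold with $r=1-\theta$, whence $\rho=1-r/\|\bar a\|_\infty=\theta$. Substituting these values into \iref{approuk} yields directly
$$\|u-u_k\|_{L^\infty(U,V)}\leq\frac{\|\bar A^{-1}f\|_V}{1-\theta}\theta^{k+1},$$
which is the first inequality of the corollary.

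Second, for the $n$-width, the span $W_k:=\linspan\{v_1,\ldots,v_{n(k)}\}$ has dimension at most $n(k)=8k+5$, and the estimate above shows that
$$d_{n(k)}(u(U))_V\leq\sup_{y\in U}\min_{w\in W_k}\|u(y)-w\|_V\leq\frac{\|\bar A^{-1}f\|_V}{1-\theta}\theta^{k+1}.$$
For arbitrary $n\geq 5$, I choose $k:=\floor{(n-5)/8}$ so that $n(k)\leq n$, which by the monotonicity of Kolmogorov widths yields $d_n(u(U))_V\leq d_{n(k)}(u(U))_V$. Since $k+1\geq(n+3)/8\geq n/8$, it follows that $\theta^{k+1}\leq\exp(-\frac{|\ln\theta|}{8}n)$, giving the stated decay with an implicit constant depending on $\theta$ and $\|\bar A^{-1}f\|_V$ (and absorbing the factor for small $n$ if needed).

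There is no real obstacle here: the deep content sits entirely in Theorem \ref{thm:rankest2x2}, and the only care required is in verifying that the contraction rate $\rho$ in Proposition \ref{propcontraction} reduces to $\theta$ under the piecewise constant assumption of Section \ref{sec:2x2}, and in choosing $k$ so that $8k+5\leq n$ while $k+1$ remains proportional to $n/8$.
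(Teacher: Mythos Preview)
Your proof is correct and follows exactly the route the paper indicates: combine the exact low-rank representation of $u_k$ from Theorem \ref{thm:rankest2x2} with the truncation estimate \iref{approuk}, specializing $\rho=\theta$ and $\|\cdot\|_{\bar A}=\|\cdot\|_V$ since $\bar a\equiv 1$. One minor arithmetic slip: from $k=\floor{(n-5)/8}$ you only get $k+1\geq (n-5)/8$, not $(n+3)/8$, but this is harmless since the resulting factor $\theta^{-5/8}$ is absorbed into the implicit constant of the $\lesssim$.
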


We begin by introducing the notation 
\be  
 \textstyle\Gamma_1 := [-\frac12,\frac12]\times\{0\} \,,\quad \Gamma_2 := \{0\}\times[-\frac12,\frac12] \,,
 \ee
 for the horizontal and vertical components of the skeleton $\Gamma$ of the decomposition \eqref{part2x2}.
The space $V_\Gamma$ introduced at the beginning of this section can be decomposed into the three closed subspaces
\be
\begin{gathered}  \label{hatVspaces}
 \hat{V}_1 := \{  v\in V_\Gamma \colon \text{$v|_{\Gamma_1}$ and $v|_{\Gamma_2}$ are even} \} \,, \\
  \hat{V}_2 := \{  v\in V_\Gamma \colon \text{$v|_{\Gamma_1}$ odd, $v|_{\Gamma_2}=0$} \} \,, \\
  \hat{V}_3 := \{  v\in V_\Gamma \colon \text{$v|_{\Gamma_2}$ odd, $v|_{\Gamma_1}=0$} \}  \,.
\end{gathered}
\ee

The following result decribes the action of
$E$ on these spaces. Its proof is immediate from symmetry arguments 
and is therefore omitted.

\begin{prop}\label{prop:symm}
For the harmonic extension $Ew$ of $w\in V_\Gamma$, we have the following symmetry and antisymmetry properties depending on the particular subspaces: if $w \in \hat{V}_1$, then
$Ew(-x_1,x_2) = E w(x_1,x_2)$ and $Ew(x_1,-x_2) = E w(x_1,x_2)$ for a.e.\ $x \in D$; if $w \in\hat{V}_2$, then $Ew(-x_1,x_2) = -Ew(x_1,x_2)$ and $Ew(x_1,-x_2) = Ew(x_1,x_2)$; 
and if $w \in\hat{V}_3$, then $Ew(x_1, -x_2) = -Ew(x_1,x_2)$ and $Ew(-x_1,x_2) = Ew(x_1,x_2)$.
\end{prop}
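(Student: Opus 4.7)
The plan is to deduce each symmetry of $Ew$ from a uniqueness argument: for each reflection $\sigma$ of $D$ that preserves the partition $\{D_1,\ldots,D_4\}$, I compare $Ew$ with a candidate function $\tilde v := \pm(Ew)\circ \sigma$ and show that $\tilde v$ solves the same defining problem as $Ew$, so that $\tilde v = Ew$ by uniqueness. Because $\bar a \equiv 1$ and the partition is invariant under the two reflections $\sigma_1\colon (x_1,x_2)\mapsto(-x_1,x_2)$ and $\sigma_2\colon (x_1,x_2)\mapsto(x_1,-x_2)$ (each $D_i$ is mapped to some $D_j$), the function $\tilde v$ is automatically harmonic on each subdomain, and vanishes on $\partial D$, so the only thing that needs to be checked is its boundary trace on $\Gamma = \Gamma_1\cup\Gamma_2$.

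For $w\in\hat V_1$: define $\tilde v(x_1,x_2) := Ew(-x_1,x_2)$. On $\Gamma_1$ we have $\tilde v(x_1,0) = w(-x_1,0) = w(x_1,0)$ by evenness of $w|_{\Gamma_1}$, and on $\Gamma_2$ we have $\tilde v(0,x_2) = w(0,x_2)$ trivially. Thus $\tilde v$ satisfies the defining variational problem for $Ew$ subdomain-by-subdomain, so $\tilde v = Ew$, i.e., $Ew$ is even in $x_1$. The analogous argument with $\sigma_2$ gives evenness in $x_2$.

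For $w\in\hat V_2$: to obtain antisymmetry in $x_1$, set $\tilde v(x_1,x_2) := -Ew(-x_1,x_2)$. On $\Gamma_1$, $\tilde v(x_1,0) = -w(-x_1,0) = w(x_1,0)$ by oddness, and on $\Gamma_2$, $\tilde v(0,x_2) = -Ew(0,x_2) = -w(0,x_2) = 0 = w(0,x_2)$. Hence $\tilde v = Ew$. For symmetry in $x_2$, set $\tilde v(x_1,x_2) := Ew(x_1,-x_2)$; then on $\Gamma_1$ (where $x_2 = 0$) the trace is unchanged, and on $\Gamma_2$ both $\tilde v$ and $Ew$ vanish, so again $\tilde v = Ew$. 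The case $w\in\hat V_3$ is entirely symmetric, by swapping the roles of the two coordinates.

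No real obstacle is present; the only thing to be careful about is that when we reflect $Ew$ across a coordinate axis, the reflected function is harmonic on the image subdomain precisely because $\bar a$ is constant (so reflection commutes with $-\operatorname{div}(\bar a\nabla\cdot)$) and because each reflection maps the partition to itself, so no spurious jump conditions arise across the skeleton. Given the already-stated variational definition of $E$ and the ellipticity of $\bar S$, uniqueness of the extension completes the argument.
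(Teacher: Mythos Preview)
Your argument is correct and is precisely the kind of symmetry-plus-uniqueness reasoning the paper has in mind; the paper actually omits the proof entirely, stating only that it ``is immediate from symmetry arguments.'' Your write-up fills in exactly those details, and the care you take about $\bar a\equiv 1$ and the reflection-invariance of the partition is appropriate for the setting of Section~\ref{sec:2x2}.
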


For the proofs of the following results, it will be convenient to introduce the notation
\be
  (u, v)_{\Gamma, i} := \int_{D_i} \nabla Eu \cdot \nabla E v\,dx  \,,
\ee
so that $ (u, v)_{\Gamma} =\sum_{i=1}^4  (u, v)_{\Gamma, i}$.

\begin{prop}\label{prop:orth}
The spaces $\hat{V}_i$, $i=1,2,3$, form an orthogonal decomposition of $V_\Gamma$ with respect to the inner product $( \cdot,\cdot)_\Gamma$.
\end{prop}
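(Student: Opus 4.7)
My plan is to prove (i) that every $w \in V_\Gamma$ admits a decomposition $w = w_1 + w_2 + w_3$ with $w_i \in \hat V_i$, and (ii) that the three subspaces are pairwise $(\cdot,\cdot)_\Gamma$-orthogonal. Combined these give the orthogonal decomposition claimed.

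For (i), I would use the fact that any $w \in V_\Gamma$ is uniquely determined by its two traces $w|_{\Gamma_1}$ and $w|_{\Gamma_2}$, viewed as functions on $[-\frac12,\frac12]$ vanishing at $\pm\frac12$ and sharing the common value $w(0,0)$ at the origin. Writing each of these traces as the sum of its even and odd parts about the midpoint $0$, I note that the odd parts automatically vanish at $0$, while the even parts agree at $0$ (both equal $w(0,0)$). I then define $w_1 \in \hat V_1$ by taking the even parts on both $\Gamma_1$ and $\Gamma_2$; $w_2 \in \hat V_2$ by taking the odd part on $\Gamma_1$ and zero on $\Gamma_2$; and $w_3 \in \hat V_3$ symmetrically. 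Each piece lies in $V_\Gamma = H^{1/2}_{00}(\Gamma)$ since even/odd extensions preserve the $H^{1/2}_{00}$ regularity, and the compatibility at the origin is preserved in each summand.

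For (ii), the key tool is Proposition \ref{prop:symm}. Fix $u \in \hat V_i$, $v \in \hat V_j$ with $i\neq j$; I need to show that $(u,v)_\Gamma = \int_D \nabla Eu \cdot \nabla Ev \, dx = 0$. The domain $D = ]-\frac12,\frac12[^2$ is invariant under each reflection $(x_1,x_2)\mapsto(-x_1,x_2)$ and $(x_1,x_2)\mapsto(x_1,-x_2)$. Using the parities recorded in Proposition \ref{prop:symm}, I tabulate the parities of $\partial_{x_k} Eu$ and $\partial_{x_k} Ev$ in each variable; for every pair $i\neq j$ and $k\in\{1,2\}$, the product $\partial_{x_k} Eu \,\partial_{x_k} Ev$ turns out to be odd in at least one of $x_1, x_2$. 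For example, when $u\in\hat V_1, v\in\hat V_2$, the product $\partial_{x_1}Eu\,\partial_{x_1}Ev$ is odd in $x_1$ and even in $x_2$, and $\partial_{x_2}Eu\,\partial_{x_2}Ev$ is also odd in $x_1$; both integrate to zero over $D$. The cases $(1,3)$ and $(2,3)$ are analogous.

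Finally, orthogonality with respect to the inner product $(\cdot,\cdot)_\Gamma$ (which induces the norm on $V_\Gamma$) implies that the sum in (i) is a direct sum, so $V_\Gamma = \hat V_1 \oplus \hat V_2 \oplus \hat V_3$ orthogonally. The only nontrivial step is checking that the even/odd splitting on the two arms of $\Gamma$ remains compatible at the crossing point and preserves the $H^{1/2}_{00}$ trace regularity; the rest is a routine parity computation.
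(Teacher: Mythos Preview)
Your proposal is correct and follows essentially the same approach as the paper: both rely on Proposition~\ref{prop:symm} to deduce orthogonality from the reflection symmetries of the harmonic extensions, and both reduce the decomposition step to the even/odd splitting of the traces on $\Gamma_1$ and $\Gamma_2$. The only cosmetic difference is that the paper pairs the subdomain integrals $(u,v)_{\Gamma,i}$ via reflections to obtain cancellations, whereas you argue componentwise that each $\partial_{x_k}Eu\,\partial_{x_k}Ev$ is odd in one variable over the symmetric domain $D$; these are equivalent formulations of the same symmetry argument, and your explicit treatment of the compatibility at the origin and the $H^{1/2}_{00}$ regularity is somewhat more careful than the paper's one-line remark that the decomposition ``always exists.''
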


\begin{proof}
Note first that a unique decomposition into even and odd parts of this form always exists.
To show orthogonality, for symmetry reasons, it suffices to consider the pairings $\hat{V}_1$, $\hat{V}_2$ and $\hat{V}_2$, $\hat{V}_3$. If either $u\in\hat{V}_3$, $v\in\hat{V}_2$ or $u\in\hat{V}_1$, $v\in\hat{V}_2$, Proposition \ref{prop:symm} gives 
\be
  (u, v)_{\Gamma,2} = - (u, v)_{\Gamma, 1} \,,\quad (u,v)_{\Gamma,3} = - (u,v)_{\Gamma, 4} \,,
\ee
and consequently $(u,v)_\Gamma = 0$.
\end{proof}

We now consider the following operators:
\begin{align*}
  H_0 &:= \bar{S},   &  H_2 &:= S_1 + S_2 - S_3 - S_4,   \\
  H_1 &:= S_1 - S_2 - S_3 + S_4,  &   H_3 &:= S_1 - S_2 + S_3 - S_4  \,. 
\end{align*}
For convenience, we set $G_i := \theta \bar{S}^{-1} H_i$, $i=0,\ldots,3$.
For each $k$, the partial sums $u_{k,\Gamma}$ introduced in \eqref{neumannpartialif} can be expressed in terms of the operators $G_i$.
We introduce the new variables 
\begin{align*}
  z_0=z_0(y) & :=\frac14(y_1+y_2+y_3+y_4),  &   z_2=z_2(y) &:= \frac14(y_1+y_2 - y_3-y_4) \,,  \\[6pt]
  z_1=z_1(y) &:= \frac14(y_1-y_2-y_3+y_4) , &  z_3 =z_3(y)&:=\frac14(y_1-y_2+y_3-y_4)  \,,
\end{align*}
so that
\be
\sum_{i=0}^{3} z_i G_i=\sum_{i=1}^4 y_i(\theta \bar{S}^{-1}S_i).
\ee
In view of \iref{neumannpartialif}, we thus obtain the representation
\be \label{interfacez}
  u_{k,\Gamma}(z) = \sum_{\ell=0}^k \Bigl( \sum_{i=0}^{3} z_i G_i\Bigr)^\ell  g_\Gamma\,.  
\ee
As we shall now show, the operators $G_i$ act on the spaces \eqref{hatVspaces} in a very particular way, which will eventually allow us to obtain the desired rank estimate.

\begin{lemma}\label{lmm:zero}
One has $G_i(\hat{V}_i) = \{0\}$ for $i=1,2,3$.
\end{lemma}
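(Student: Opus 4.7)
The plan is to reduce the statement $G_i v = 0$ for $v \in \hat{V}_i$ to the assertion $H_i v = 0$, since $\bar{S} \colon V_\Gamma \to V_\Gamma'$ is an isomorphism. To verify $H_i v = 0$, I will test against arbitrary $w \in V_\Gamma$ and use the orthogonal decomposition from Proposition \ref{prop:orth} to write $w = w_1 + w_2 + w_3$ with $w_j \in \hat{V}_j$, reducing everything to showing that $\langle H_i v, w_j\rangle = 0$ in each of the nine combinations with $v \in \hat{V}_i$ and $w_j \in \hat{V}_j$.

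The core mechanism is the parity calculus provided by Proposition \ref{prop:symm}. Associate to each $\hat{V}_j$ a sign pair $(s_1, s_2) \in \{\pm\}^2$ encoding the parity of $Eu$ under $x_1 \mapsto -x_1$ and $x_2 \mapsto -x_2$ respectively: $(+,+)$ for $\hat{V}_1$, $(-,+)$ for $\hat{V}_2$, and $(+,-)$ for $\hat{V}_3$. Since differentiating flips the parity in the corresponding variable, if $Ev$ has parity $(s_1,s_2)$ and $Ew_j$ has parity $(t_1,t_2)$, then the integrand $\nabla Ev \cdot \nabla Ew_j$ has parity $(s_1 t_1, s_2 t_2)$. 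Changing variables by the reflections that permute the four quadrants, the four integrals $\int_{D_k} \nabla Ev \cdot \nabla Ew_j\,dx$ are then all equal up to explicit $\pm 1$ signs determined by $(s_1 t_1, s_2 t_2)$.

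It remains to observe that the sign patterns defining $H_1, H_2, H_3$ are tuned precisely so that the resulting signed sums vanish. Concretely, $H_i$ carries a sign vector $(\epsilon_1,\epsilon_2,\epsilon_3,\epsilon_4)$ on the four quadrants, and $\langle H_i v, w_j\rangle$ becomes a multiple of $\int_{D_1} \nabla Ev \cdot \nabla Ew_j\,dx$ with coefficient obtained by combining $\epsilon$ with the sign vector induced by $(s_1 t_1, s_2 t_2)$. A direct finite check (three values of $j$ for each fixed $i \in \{1,2,3\}$) shows that this coefficient vanishes in every case; for instance, when $i=1$ so that $(s_1,s_2)=(+,+)$ and $\epsilon=(+,-,-,+)$, the three tests against $\hat{V}_1$, $\hat{V}_2$, $\hat{V}_3$ give respectively $\epsilon_1 + \epsilon_2 + \epsilon_3 + \epsilon_4$, $\epsilon_1 - \epsilon_2 + \epsilon_3 - \epsilon_4$, and $\epsilon_1 + \epsilon_2 - \epsilon_3 - \epsilon_4$, all of which equal zero.

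The only nontrivial point is really the parity bookkeeping, since once it is established that $\nabla Ev \cdot \nabla Ew_j$ has a definite parity type on the square $D$ and that the four subdomain integrals are therefore related by explicit signs, the vanishing of the three coefficient sums is an immediate $3\times 3$ verification. I expect the cleanest exposition to introduce the signed-integral lemma once for general $(s_1 t_1, s_2 t_2)$ and then apply it uniformly, rather than redoing the change of variables in each case.
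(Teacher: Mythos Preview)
Your proposal is correct and follows essentially the same approach as the paper: reduce to $H_i(\hat V_i)=\{0\}$ via the isomorphism $\bar S$, test against $w=w_1+w_2+w_3$ with $w_j\in\hat V_j$, and use the reflection symmetries of Proposition~\ref{prop:symm} to relate the four quadrant integrals $(v,w_j)_{\Gamma,k}$ and see that the signed sums defining $H_i$ cancel. The paper carries this out case by case for $i=1,2$ and invokes the $x_1\leftrightarrow x_2$ symmetry for $i=3$, whereas you package the same computation into a uniform parity calculus on $\nabla Ev\cdot\nabla Ew_j$; this is a cleaner organization of the identical argument, not a different route.
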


\begin{proof}
Since $\bar{S}$ is an isomorphism from $V_\Gamma$ to $V'_\Gamma$, we 
prove the equivalent statement that  $H_i(\hat{V}_i)=\{0\}$ for $i=1,2,3$.
Let $u_1 \in \hat{V}_1$ and $v \in V_\Gamma$. Then by definition,
\be  \label{eq:zero_even}
   \langle H_1 u_1, v\rangle =  (u_1, v)_{\Gamma,1 } - (u_1, v)_{\Gamma,2} - (u_1,v)_{\Gamma,3} + (u_1, v)_{\Gamma,4}.
\ee
If $v\in\hat{V}_1$, the four integrals on the right hand side all give the same value, and hence $\langle H_1 u_1, v\rangle =0$; if $v\in\hat{V}_2$ or $v\in \hat{V}_3$, then using a change of variables and the above antisymmetry properties, we obtain
\be
   (u_1, v)_{\Gamma, 4} = - (u_1, v)_{\Gamma, 1} \,, \quad  (u_1, v)_{\Gamma, 3} = - (u_1, v)_{\Gamma, 2} \,.
\ee
We have thus shown $\langle H_1 u_1, v\rangle = 0$ for any $v\in V_\Gamma$, or equivalently 
that $H_1(\hat V_1)=\{0\}$. Next, let $u_2\in\hat{V}_2$. Then 
\be
   \langle H_2 u_2, v\rangle = (u_2, v)_{\Gamma, 1} + (u_2, v)_{\Gamma, 2} - (u_2, v)_{\Gamma,3} - (u_2, v)_{\Gamma, 4} \,. 
\ee
If $v\in \hat{V}_1$ or $v\in\hat{V}_3$, we have
\be
   (u_2, v)_{\Gamma, 2} = - (u_2, v)_{\Gamma,1} \,, \quad (u_2, v)_{\Gamma, 4} = - (u_2, v)_{\Gamma, 3} \,. 
\ee
If $v\in\hat{V}_2$, again all four integrals give the same value. Thus for all $v\in V_\Gamma$, $\langle H_2 u_2,v\rangle = 0$,
 or equivalently 
$H_2(\hat V_2)=\{0\}$. 
The statement $H_3(\hat{V}_3)=\{0\}$ follows by exchanging the role of the coordinates $(x_1,x_2)$.
\end{proof}

\begin{lemma}\label{lmm:one}
One has
\be  G_2( \hat{V}_1), G_1 ( \hat{V}_2) \subset \hat{V}_3\,,\quad G_3(\hat{V}_1 ), G_1 (\hat{V}_3 )\subset \hat{V}_2\,, \quad\text{and}\quad G_3 (\hat{V}_2 ), G_2 (\hat{V}_3) \subset \hat{V}_1\,. \ee
\end{lemma}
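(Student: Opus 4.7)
The plan is to follow closely the template of the proof of Lemma \ref{lmm:zero}. Because $G_i=\theta\bar S^{-1}H_i$ with $\bar S$ an isomorphism from $V_\Gamma$ onto $V_\Gamma'$, and Proposition \ref{prop:orth} gives the orthogonal decomposition $V_\Gamma=\hat V_1\oplus\hat V_2\oplus\hat V_3$ for $(\cdot,\cdot)_\Gamma$, the inclusion $G_i u\in\hat V_j$ amounts to
\begin{equation*}
(G_i u,v)_\Gamma=\theta\langle H_i u,v\rangle=0\quad\text{for every }v\in\hat V_k\text{ with }k\neq j.
\end{equation*}
The six claims thus reduce to twelve scalar identities of the form $\langle H_i u,v\rangle=0$.

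I would first work through one representative case in full, say $G_2(\hat V_1)\subset\hat V_3$. For $u\in\hat V_1$ and $v\in V_\Gamma$,
\begin{equation*}
\langle H_2 u,v\rangle=(u,v)_{\Gamma,1}+(u,v)_{\Gamma,2}-(u,v)_{\Gamma,3}-(u,v)_{\Gamma,4},
\end{equation*}
and vanishing must be checked for $v\in\hat V_1$ and for $v\in\hat V_2$. If $v\in\hat V_1$, Proposition \ref{prop:symm} makes $\nabla Eu\cdot\nabla Ev$ invariant under both coordinate reflections; a change of variables equates the four integrals, which then cancel under the sign pattern $(+,+,-,-)$. If $v\in\hat V_2$, $Ev$ is odd in $x_1$ and even in $x_2$ while $Eu$ is even in both, so $\nabla Eu\cdot\nabla Ev$ picks up a minus sign under $x_1\mapsto-x_1$ and is invariant under $x_2\mapsto-x_2$; the change of variables then yields $(u,v)_{\Gamma,2}=-(u,v)_{\Gamma,1}$ and $(u,v)_{\Gamma,4}=-(u,v)_{\Gamma,3}$, and the signed sum again vanishes.

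The five remaining inclusions are treated by the same scheme: expand $\langle H_i u,v\rangle$ as a signed sum over quadrants, read off the parities of $\nabla Eu\cdot\nabla Ev$ in each coordinate from those of $Eu$ and $Ev$ via Proposition \ref{prop:symm}, pair integrals across reflected subdomains, and observe the cancellation against the sign pattern of $H_i$. The underlying picture is that $\hat V_1,\hat V_2,\hat V_3$ and $H_1,H_2,H_3$ each correspond to one of the three non-trivial characters of the Klein four-group generated by the reflections $x_1\mapsto-x_1$ and $x_2\mapsto-x_2$, and that $\langle H_i u,v\rangle$ is nonzero only when the product of the characters of $u$, $v$ and $H_i$ is trivial; this simultaneously encodes both Lemma \ref{lmm:zero} and the present lemma. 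I do not anticipate any substantive obstacle: each of the six cases is a short parity-bookkeeping calculation parallel to the one above, and in the final write-up two representative cases suffice, the remainder following by the $(x_1,x_2)$-exchange symmetry.
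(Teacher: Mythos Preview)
Your proposal is correct and follows essentially the same symmetry-cancellation argument as the paper, using Proposition~\ref{prop:symm} to relate the quadrant integrals $(u,v)_{\Gamma,i}$ and then matching them against the sign pattern of $H_i$. The paper organizes the bookkeeping slightly differently: it first records the observation that $\langle H_i u_j,v_j\rangle=0$ whenever $u_j,v_j\in\hat V_j$ (all four quadrant integrals coincide), and for the remaining pairings it invokes the self-adjointness of $H_i$ together with Lemma~\ref{lmm:zero} (e.g.\ $\langle H_2 v_1,w_2\rangle=\langle v_1,H_2 w_2\rangle=0$) rather than redoing the parity calculation; your character-theoretic remark is a nice unifying viewpoint, though note that $\hat V_1$ actually carries the \emph{trivial} character, not a nontrivial one---the product rule you state is nonetheless correct.
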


\begin{proof}
By the symmetries in the problem, it suffices to consider $G_2(\hat{V}_1)$, $G_1(\hat{V}_2)$, and $G_3(\hat{V}_2)$.
We begin with the following observation: if $u_j,v_j \in \hat{V}_j$, then 
\begin{equation}\label{eq:obs}   \langle H_i u_j,v_j\rangle = 0\,,\quad \text{for all $i,j\in\{1,2,3\}$.}  
\end{equation}
To see this, we argue as in the proof of Lemma \ref{lmm:zero}, noting that
\be     (u_j, v_j)_{\Gamma, n} = (u_j, v_j)_{\Gamma, m}  \,,\quad n, m\in\{1,\ldots,4\}\,, \ee
since $E u_j$ and $E v_j$ have the same symmetry or antisymmetry properties. We now treat the three above cases.
\medskip

\noindent(i) $G_2(\hat{V}_1)\subset \hat V_3$: let $v_1\in \hat{V}_1$. 
In view of Proposition \ref{prop:orth}, we need to show that $H_2 v_1$ annihilates the elements in $\hat V_1$ and $\hat V_2$.
With $w_1\in \hat V_1$ and $w_2\in \hat V_2$,  we have $\langle H_2 v_1, w_2\rangle = \langle v, H_2 w_2\rangle = 0$
from Lemma \ref{lmm:zero}, and $\langle H_2 v_1, w_1\rangle=0$ follows from \eqref{eq:obs}. 
Altogether, this gives $G_2(\hat{V}_1) \subset \hat{V}_3$.
\medskip

\noindent(ii) $G_1(\hat{V}_2)\subset \hat V_3$: let $v_2\in\hat{V}_2$. With $w_1\in \hat V_1$ and $w_2\in \hat V_2$, we again have $\langle H_1 v_2, w_1\rangle = \langle v_2, H_1 w_1\rangle = 0$, and $\langle H_1 v_2, w_2\rangle = 0$ by \eqref{eq:obs}, showing $G_1(\hat{V}_2) \subset \hat{V}_3$. 
\medskip

\noindent(iii) $G_3(\hat{V}_2)\subset \hat V_1$: let $v_2\in\hat{V}_2$. With $w_2\in \hat V_2$ and $w_3\in \hat V_3$, we have $\langle H_3 v_2, w_2\rangle = 0$ by \eqref{eq:obs} and $\langle H_3 v_2, w_3\rangle = \langle v_2, H_3 w_3\rangle = 0$, which shows $G_3(\hat{V}_2)\subset \hat{V}_1$.
\end{proof}

\begin{lemma}\label{lmm:two}
For any $v_2\in \hat{V}_2$, $v_3\in\hat{V}_3$, one has
\be  G_2 G_3 v_2 = G_1 v_2 \,, \quad G_3 G_2 v_3 = G_1 v_3 \,,\quad  G_3^2 v_2 = v_2 \,,\quad G_2^2 v_3 = v_3 \,.    \ee
\end{lemma}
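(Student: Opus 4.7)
I will test each identity against an arbitrary element of the common target subspace (identified by Lemmas \ref{lmm:zero} and \ref{lmm:one}) and reduce, via the parity arguments already used in those lemmas, to a comparison of Dirichlet integrals on the single quadrant $D_1$. The crux will be an auxiliary extension identity: for $v_2 \in \hat V_2$, the element $\tilde u := \bar S^{-1} H_3 v_2$, which lies in $\hat V_1$ by Lemma \ref{lmm:one}, satisfies $E\tilde u|_{D_1} = Ev_2|_{D_1}$ (the signed identities on the remaining quadrants then being forced by the $R_1, R_2$-parities).

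To prove this extension identity, I pair $\bar S \tilde u = H_3 v_2$ with an arbitrary $w_1 \in \hat V_1$. The parity bookkeeping of Lemma \ref{lmm:one} shows that $\nabla Ev_2 \cdot \nabla Ew_1$ is odd under $x_1 \mapsto -x_1$ and even under $x_2 \mapsto -x_2$, giving the sign pattern $(+,-,+,-)$ over $D_1,\ldots,D_4$, hence $\langle H_3 v_2, w_1\rangle = 4(v_2, w_1)_{\Gamma,1}$; meanwhile $\nabla E\tilde u \cdot \nabla Ew_1$ is invariant across all four subdomains, so $(\tilde u, w_1)_\Gamma = 4(\tilde u, w_1)_{\Gamma,1}$. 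Consequently $\int_{D_1} \nabla(E\tilde u - Ev_2)\cdot \nabla Ew_1\, dx = 0$ for every $w_1 \in \hat V_1$. Any trace on the L-shaped interface $\Gamma \cap \partial D_1$ vanishing at the two corners on $\partial D$ extends, by even reflection across the midpoints of $\Gamma_1$ and $\Gamma_2$, to an element of $\hat V_1$, so the restrictions $\{Ew_1|_{D_1}\}$ exhaust the space of harmonic functions on $D_1$ vanishing on $\partial D \cap \partial D_1$; both $E\tilde u|_{D_1}$ and $Ev_2|_{D_1}$ lie in this space, and positive-definiteness of the Dirichlet form forces equality.

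Given the extension identity, each of the four claims reduces to one further parity calculation. For the involution identity, set $\tilde u' := \bar S^{-1} H_3 \tilde u \in \hat V_2$ and pair with $w_2 \in \hat V_2$: the sign pattern of $\nabla E\tilde u \cdot \nabla Ew_2$ gives $(\tilde u', w_2)_{\Gamma,1} = (\tilde u, w_2)_{\Gamma,1}$, which equals $(v_2, w_2)_{\Gamma,1}$ by the extension identity. Since $E\tilde u'|_{D_1}$ and $Ev_2|_{D_1}$ both lie in the smaller space of harmonic functions on $D_1$ vanishing also on $\Gamma_2 \cap \partial D_1$ (the $E$-image of $\hat V_2$), Dirichlet-orthogonality yields $\tilde u' = v_2$. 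For the product identity $G_2 G_3 v_2 = G_1 v_2$, parity gives $\langle H_2 \tilde u, w_3\rangle = 4(\tilde u, w_3)_{\Gamma,1}$ and $\langle H_1 v_2, w_3\rangle = 4(v_2, w_3)_{\Gamma,1}$ for $w_3 \in \hat V_3$, and these sides coincide by the extension identity. The two identities for $v_3 \in \hat V_3$ follow from the coordinate swap $x_1 \leftrightarrow x_2$, which interchanges $\hat V_2 \leftrightarrow \hat V_3$ and $G_2 \leftrightarrow G_3$.

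The main obstacle will be the surjectivity claim for the trace map from $\hat V_1$ (and, for the involution identity, from $\hat V_2$) onto the appropriate subspace of the trace space on $\Gamma \cap \partial D_1$: this requires a careful $H^{1/2}_{00}$ analysis of even (resp.\ odd) extensions across the interior midpoints of $\Gamma_1$ and $\Gamma_2$, which is the only genuinely geometric input. The remaining parity bookkeeping is routine once the computational pattern of Lemma \ref{lmm:one} is in hand.
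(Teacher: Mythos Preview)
Your plan is correct but follows a genuinely different route from the paper. You first establish a geometric \emph{extension identity} $E(\bar S^{-1} H_3 v_2)|_{D_1} = E v_2|_{D_1}$ by a parity computation together with surjectivity of the trace--restriction map from $\hat V_1$ onto the local trace space on $\Gamma\cap\partial D_1$; the four operator identities then follow as corollaries by further parity bookkeeping on a single quadrant. The paper instead avoids any trace--surjectivity argument by a \emph{reflected test function} trick: to compare, say, $\langle H_2 p_1, u\rangle$ for $u\in\hat V_3$ with $\langle \bar S p_1, \cdot\rangle$, it sets $v := (\Chi_{\{x_2\le 0\}} - \Chi_{\{x_2>0\}})\,u \in \hat V_1$, so that $Ev|_{D_i} = \pm Eu|_{D_i}$ with signs exactly converting the pattern of $H_2$ into that of $\bar S$ (and, in the same stroke, $H_3$ into $H_1$); the identity then drops out in one line from the variational definitions, and the involution case uses the analogous reflection in $x_1$. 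Your approach buys a stronger geometric statement --- the harmonic extensions literally coincide on $D_1$ --- at the price of the $H^{1/2}_{00}$ even-extension argument you correctly flag as the only real obstacle; the paper's approach is more direct and requires no functional-analytic input beyond what is already in place, but yields only the algebraic identities.
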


\begin{proof}
By symmetry arguments, it suffices to show the first and the third statement. Let $p_3:=G_2 G_3 v_2\in \hat V_3$, $p_1:=G_3 v_2\in \hat V_1$, and $q_3 := G_1 v_2\in \hat V_3$. The first statement now reads $p_3=q_3$.
By definition,
\be 
\label{eq:varform1}  \langle \bar{S} p_3, u\rangle = \langle H_2 p_1,u\rangle, \quad u\in V_\Gamma,
\ee
\be
\langle \bar{S} p_1,v\rangle = \langle H_3 v_2, v\rangle\,,
\quad v\in V_\Gamma \,, 
\label{eq:varform11}
\ee
as well as
\be\label{eq:varform2}
  \langle \bar{S} q_3, w\rangle = \langle H_1 v_2 ,w\rangle \,,\quad w\in V_\Gamma \,.
\ee
For given $u \in \hat V_3$, we now set 
\be
  v:=(\Chi_{\{x\in D\colon x_2\leq 0\}} - \Chi_{\{x\in D\colon x_2> 0\}})u \in \hat{V}_{1} \,.
\ee
For this choice of $v$, the definition of $\{H_1,H_2,H_3\}$ shows that we have $\langle H_2 p_1, u\rangle = \langle \bar{S}p_1,v\rangle$ as well as $\langle H_3 v_2, v\rangle = \langle H_1v_2, u\rangle$. Using \eqref{eq:varform1} and \eqref{eq:varform11}
this implies $\langle \bar{S} p_3, u\rangle = \langle H_1 v_2, u\rangle$ for any $u\in\hat{V}_3$. By \eqref{eq:varform2}, we thus reach
\be
\<\bar S p_3,u\>=\<\bar S q_3,u\>, \quad u\in \hat V_3.
\ee
By Lemma \ref{lmm:zero}, both sides in the above equality vanish in the the case where $u\in \hat V_1\oplus \hat V_2$.
Since $\bar S$ is an isomorphism, it follows that $p_3 = q_3$.

Let $p_2:= G^2_3 v_2$ and $q_1 := G_3 v_2$. The third statement reads $p_2 = v_2$. By definition of $p_2$ and $q_1$,
\be   \label{eq:varform3}
\langle \bar{S} p_2, u\rangle = \langle H_3 q_1, u\rangle,  \quad u\in V_\Gamma\,,
\ee
\be   
\label{eq:varform4}
\langle \bar{S} q_1,v\rangle = \langle H_3 v_2, v\rangle\,,\quad v\in V_\Gamma\,.
\ee
For $u\in\hat{V}_2$, we set 
\be
v:=(\Chi_{\{x\in D\colon x_1\leq 0\}} - \Chi_{\{x\in D\colon x_1> 0\}})u \in \hat{V}_{1}.
\ee 
For this choice of $v$, the definition of $H_3$ shows that we have $\langle H_3 q_1, u\rangle=\langle \bar{S} q_1,v\rangle$ and $\langle H_3 v_2, v\rangle = \langle\bar{S}v_2,u\rangle$. Using \eqref{eq:varform3} and  \eqref{eq:varform4}, we reach
\be
\langle \bar{S} p_2,u\rangle = \langle \bar{S} v_2,u\rangle, \quad u\in\hat{V}_2.
\ee
By Lemma \ref{lmm:zero}, both sides in the above equality vanish in the the case where $u\in \hat V_1\oplus \hat V_3$.
Since $\bar S$ is an isomorphism, it follows that $p_2 = v_2$.
\end{proof}

Recall that our aim is to bound the rank of the representation \eqref{interfacez} for each order $k$, which is bounded by the number of linearly independent elements of $V_\Gamma$ that are created applying compositions of the operators $G_i$, $i=1,2,3$, up to order $k$, to $g_\Gamma$.
In other words, defining
\be  F_k(v) :=  \{ v \} \cup \bigl\{  G_{i_1}\cdots G_{i_j}  v\colon 1 \leq j\leq k,\; i_1,\ldots,i_j \in \{ 1,2,3 \}  \bigr\}  \ee
for any given $v\in V_\Gamma$,
we have $\rank(u_{k,\Gamma}) \leq n(k) := \dim \linspan F_k(g_\Gamma)$. 
Note that this strategy for obtaining rank bounds can be regarded as the opposite of the one used in Section \ref{sec:powerexp}, which was based on instead grouping terms in the partial sums $u_k$ according to common polynomial degrees.

One has the trivial bound 
\be
n(k) \leq \# F_k(g_\Gamma) \leq 3^k + 1.
\ee 
Using the decomposition 
\be
g_\Gamma=g_1+g_2+g_3,
\ee
with $g_i\in \hat V_i$, we obviously have
\be
 \linspan F_k(g_\Gamma) \subset  \linspan\{ F_k(g_1)\cup F_k(g_2)\cup F_k(g_3)\},
\ee
and therefore 
\be
n(k) \leq n_1(k)+n_2(k)+n_3(k), \quad n_i(k):= \dim \linspan F_k(g_i) .
\ee
Lemmas \ref{lmm:zero} and \ref{lmm:one} allow us to bound the number
of non-zero elements in $F_k(g_i)$, and therefore $n_i(k)$, by $2^{k+1}-1$. We thus obtain the
slightly better estimate
\be 
n(k) \leq 3( 2^{k+1} -1).
\ee 
Note that these estimates are both much weaker than the bound of order $k^3$ that we have for $\rank(u_k)$ as a consequence of the results in Section \ref{sec:powerexp}. However, with the help of Lemma \ref{lmm:two}, 
we now show that the number of non-zero elements in each $F_k(g_i)$
in fact grows only \emph{linearly} in $k$, 
which explains our initial numerical observations and leads us to the proof of Theorem \ref{thm:rankest2x2}.

\begin{lemma}\label{lmm:graph}
Let $g_2\in\hat{V_2}$. Then for each $k \in \N$, the only non-zero elements in $F_k(g_2)$
are 
\begin{multline} \label{graphsets}
  \bigl\{  G_1^i g_2 \colon 0\leq i \leq k \bigr\}  
      \cup \bigl\{  G_2 G_1^{i} g_2 \colon \text{$i$ {\rm odd,} $1\leq i < k$} \bigr\}
      \cup \bigl\{  G_3 G_1^{i} g_2 \colon \text{$i$ {\rm even,} $0\leq i < k$} \bigr\}   \,. 
\end{multline}
Let $g_3\in\hat{V_3}$. Then for each $k \in \N$, the only non-zero elements in $F_k(g_3)$
are 
\begin{multline} \label{graphsetsex}
  \bigl\{  G_1^i g_3 \colon 0\leq i \leq k \bigr\}  
      \cup \bigl\{  G_3 G_1^{i} g_3 \colon \text{$i$ {\rm odd,} $1\leq i < k$} \bigr\}
      \cup \bigl\{  G_2 G_1^{i} g_3 \colon \text{$i$ {\rm even,} $0\leq i < k$} \bigr\} .
\end{multline}
\end{lemma}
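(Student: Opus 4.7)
The plan is to prove the description of $F_k(g_2)$ by induction on $k$; the statement for $g_3 \in \hat V_3$ then follows by the symmetry between the two cases, interchanging the roles of $\hat V_2$/$\hat V_3$ and of $G_2$/$G_3$. The base case $k=0$ is immediate, since $F_0(g_2) = \{g_2\} = \{G_1^0 g_2\}$ and the other two sets in the claim are empty.

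Before the induction I would record an auxiliary observation, used throughout: iterating Lemma \ref{lmm:one} gives $G_1^i g_2 \in \hat V_2$ for $i$ even and $G_1^i g_2 \in \hat V_3$ for $i$ odd, so that in particular $G_3 G_1^i g_2 \in \hat V_1$ for $i$ even and $G_2 G_1^i g_2 \in \hat V_1$ for $i$ odd. This fixes the subspace type of every element appearing in the claimed list.

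For the inductive step, assume the claim at level $k-1$. Passing to level $k$ amounts to applying each of $G_1, G_2, G_3$ to every nonzero element of $F_{k-1}(g_2)$ and checking which results are nonzero and not already in $F_{k-1}(g_2)$. On the family $G_1^i g_2$, the operator $G_1$ produces $G_1^{i+1} g_2$, which is new only when $i = k-1$ (yielding $G_1^k g_2$); by Lemma \ref{lmm:zero}, one of $G_2, G_3$ annihilates $G_1^i g_2$ depending on the parity of $i$, and the remaining one produces $G_3 G_1^i g_2$ (resp.\ $G_2 G_1^i g_2$), which is already in $F_{k-1}(g_2)$ unless $i = k-1$.

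The crucial step is that applying any $G_j$ to an element of $\hat V_1$ of the form $G_3 G_1^i g_2$ or $G_2 G_1^i g_2$ produces nothing new: Lemma \ref{lmm:zero} annihilates under $G_1$, while Lemma \ref{lmm:two} provides the reductions $G_3^2 = \id$ and $G_2 G_3 = G_1$ on $\hat V_2$ and symmetrically $G_2^2 = \id$, $G_3 G_2 = G_1$ on $\hat V_3$, so the two remaining applications respectively return $G_1^i g_2$ and $G_1^{i+1} g_2$, both already listed. This is the main obstacle, since without Lemma \ref{lmm:two} the set $F_k$ would naively grow exponentially in $k$. Collecting the cases, the only new nonzero elements at step $k$ are $G_1^k g_2$ together with exactly one of $G_2 G_1^{k-1} g_2$ or $G_3 G_1^{k-1} g_2$, according to the parity of $k-1$, which matches the claimed list and completes the induction.
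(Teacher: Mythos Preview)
Your proof is correct and follows essentially the same approach as the paper: induction on $k$, using Lemma~\ref{lmm:one} to track the subspace type of $G_1^i g_2$, Lemma~\ref{lmm:zero} to kill the appropriate $G_j$ on each subspace, and Lemma~\ref{lmm:two} to collapse the action of $G_2,G_3$ on the $\hat V_1$ side branches back into the main $G_1^i g_2$ chain. The only cosmetic difference is that the paper starts the induction at $k=1$ rather than $k=0$, and phrases the inductive step as verifying $F_{k+1} = R_{k+1}\cup\{0\}$ by computing $G_i(R_{k,j})$ for each pair $(i,j)$.
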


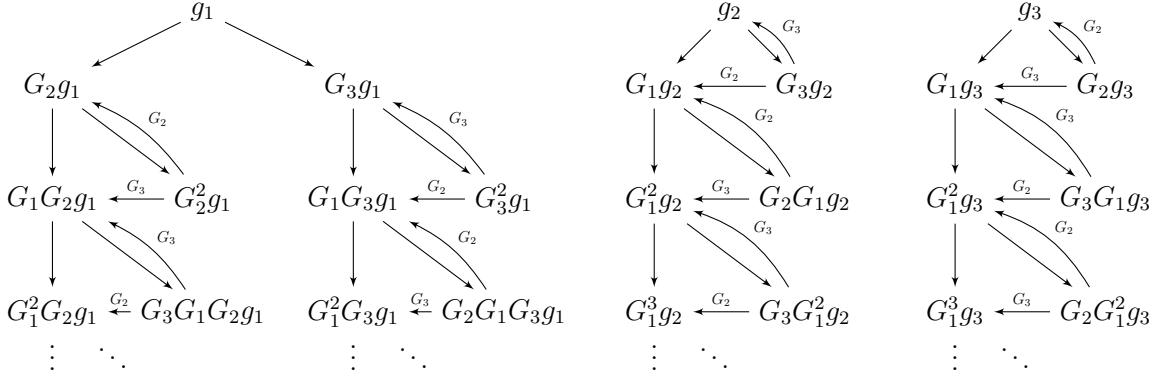
\begin{figure}
\centering
\begin{tikzpicture}[every node/.style={scale=.9}, scale=1]
\tikzset{arr/.style = {->,> = latex'}}
\node (g1) at  (-1,0) {$g_1$};
\node (g2) at  (6,0) {$g_2$};
\node (g3) at  (10,0) {$g_3$};

\node (G2g1) at (-3,-1) {$G_2 g_1$};
\node (G3g1) at (1,-1) {$G_3 g_1$};

\node (G1G2g1) at (-3,-2.5) {$G_1G_2g_1$};
\node (GG2g1) at (-1,-2.5) {$G_2^2g_1$};
\node (GG1G2g1) at (-3,-4) {$G_1^2 G_2g_1$};
\node (G3G1G2g1) at (-1,-4) {$G_3 G_1G_2g_1$};
\node (dots111) at (-3,-4.5) {$\vdots$};
\node (dots1112) at (-2.2,-4.5) {$\ddots$};

\node (G1G3g1) at (1,-2.5) {$G_1G_3g_1$};
\node (GG3g1) at (3,-2.5) {$G_3^2g_1$};
\node (GG1G3g1) at (1,-4) {$G_1^2 G_3g_1$};
\node (G2G1G3g1) at (3,-4) {$G_2 G_1G_3g_1$};
\node (dots111) at (1,-4.5) {$\vdots$};
\node (dots1113) at (1.8,-4.5) {$\ddots$};

\node (G1g2) at (5,-1) {$G_1 g_2$};
\node (G3g2) at (7,-1) {$G_3 g_2$};
\node (GG1g2) at (5,-2.5) {$G_1^2 g_2$};
\node (G2G1g2) at (7,-2.5) {$G_2 G_1g_2$};
\node (GGG1g2) at (5,-4) {$G_1^3 g_2$};
\node (dots2111) at (5,-4.5) {$\vdots$};
\node (G3GG1g2) at (7,-4) {$G_3G_1^2g_2$};
\node (dots21113) at (5.8,-4.5) {$\ddots$};

\node (G1g3) at (9,-1) {$G_1g_3$};
\node (G2g3) at (11,-1) {$G_2g_3$};
\node (GG1g3) at (9,-2.5) {$G_1^2 g_3$};
\node (G3G1g3) at (11,-2.5) {$G_3 G_1g_3$};
\node (GGG1g3) at (9,-4) {$G_1^3 g_3$};
\node (dots3111) at (9,-4.5) {$\vdots$};
\node (G2GG1g3) at (11,-4) {$G_2G_1^2g_3$};
\node (dots31112) at (9.8,-4.5) {$\ddots$};
\begin{scope}[every node/.style={scale=.5}]
\draw[arr] (g1) edge (G2g1);

\draw[arr] (G2g1) edge (G1G2g1);
\draw[arr] (G2g1) edge (GG2g1);

\draw[arr] (GG2g1)  edge[bend right=15] node[above right] {${G_2}$} (G2g1);
\draw[arr] (GG2g1) edge node[above] {$G_3$} (G1G2g1);

\draw[arr] (G1G2g1) edge (GG1G2g1);
\draw[arr] (G1G2g1) edge (G3G1G2g1);

\draw[arr] (G3G1G2g1)  edge[bend right=15] node[above right] {${G_3}$} (G1G2g1);
\draw[arr] (G3G1G2g1) edge node[above] {$G_2$} (GG1G2g1);

\draw[arr] (g1) edge (G3g1);

\draw[arr] (G3g1) edge (G1G3g1);
\draw[arr] (G3g1) edge (GG3g1);

\draw[arr] (GG3g1)  edge[bend right=15] node[above right] {${G_3}$} (G3g1);
\draw[arr] (GG3g1) edge node[above] {$G_2$} (G1G3g1);

\draw[arr] (G1G3g1) edge (GG1G3g1);
\draw[arr] (G1G3g1) edge (G2G1G3g1);

\draw[arr] (G2G1G3g1)  edge[bend right=15] node[above right] {${G_2}$} (G1G3g1);
\draw[arr] (G2G1G3g1) edge node[above] {$G_3$} (GG1G3g1);

\draw[arr] (g2) edge (G1g2);
\draw[arr] (g2) edge (G3g2);

\draw[arr] (G3g2) edge[bend right=20] node[above right] {${G_3}$} (g2);
\draw[arr] (G3g2) edge node[above] {$G_2$} (G1g2);

\draw[arr] (G1g2) edge (GG1g2);
\draw[arr] (G1g2) edge (G2G1g2);

\draw[arr] (G2G1g2)  edge[bend right=20] node[above right] {${G_2}$} (G1g2);
\draw[arr] (G2G1g2) edge node[above] {$G_3$} (GG1g2);

\draw[arr] (GG1g2) edge (GGG1g2);
\draw[arr] (GG1g2) edge[scale=0.9] (G3GG1g2);

\draw[arr] (G3GG1g2)  edge[bend right=20] node[above right] {${G_3}$} (GG1g2);
\draw[arr] (G3GG1g2) edge node[above] {$G_2$} (GGG1g2);

\draw[arr] (g3) edge (G1g3);
\draw[arr] (g3) edge (G2g3);

\draw[arr] (G2g3)  edge[bend right=20] node[above right] {${G_2}$} (g3);
\draw[arr] (G2g3) edge node[above] {$G_3$} (G1g3);

\draw[arr] (G1g3) edge (GG1g3);
\draw[arr] (G1g3) edge (G3G1g3);

\draw[arr] (G3G1g3)  edge[bend right=20] node[above right] {${G_3}$} (G1g3);
\draw[arr] (G3G1g3) edge node[above] {$G_2$} (GG1g3);

\draw[arr] (GG1g3) edge (GGG1g3);
\draw[arr] (GG1g3) edge[scale=0.9] (G2GG1g3);

\draw[arr] (G2GG1g3)  edge[bend right=20] node[above right] {${G_2}$} (GG1g3);
\draw[arr] (G2GG1g3) edge node[above] {$G_3$} (GGG1g3);
\end{scope}
\end{tikzpicture}
\caption{Illustration of the proof of Lemma \ref{lmm:graph}, where $g_\Gamma = g_1 + g_2 + g_3$.}\label{proofgraph}
\end{figure}
The idea behind the proof of Lemma \ref{lmm:graph}, which combines Lemmas \ref{lmm:zero}, \ref{lmm:one}, and \ref{lmm:two}, is illustrated on the two
graphs on the right of Figure \ref{proofgraph}. The nodes in these graphs represent the only possible non-zero functions that can be created by repeated application of the operators $G_i$, $i=1,2,3$, to a given $g_2\in \hat V_2$ or $g_3\in \hat V_3$. The first graph on the left 
represents the only possible non-zero functions that can be created by repeated application of the $G_i$
to a given $g_1\in \hat V_1$. This first graph is actually obtained by applying the third and second graphs
on its first two nodes $G_2g_1\in \hat V_3$ and $G_3g_1\in \hat V_2$.

\begin{proof} It suffices to prove the result for $F_k(g_2)$, since the result for $F_k(g_3)$ follows by symmetry.
For each $k$, we use the abbreviation $F_k := F_k(g_2)$ and denote the set in \eqref{graphsets} by 
\be 
 R_k = R_{k,1}\cup R_{k,2}\cup R_{k,3} \,.
\ee
We show that $F_{k}= R_{k}\cup \{0\}$ for all $k\geq 1$. In the case $k=1$,  we have
\be
F_k(g_2)=\{g_2,  G_1 g_2, G_2 g_2, G_3 g_2 \} = \{ g_2, G_1 g_2, 0, G_3 g_2 \},
\ee
which proves the claim in this case. We assume the claim holds for a $k \geq 1$, and we thus need to show
that $F_{k+1}=R_{k+1}\cup \{0\}$. The inclusion $R_{k+1}\cup \{0\}\subset F_{k+1}$ follows trivially from
the definitions. To show that $F_{k+1} \subset R_{k+1}\cup \{0\}$, note first that from the definition of $F_k$, we have 
\be
F_{k+1} =  \{ g_2 \} \cup \bigcup_{i=1}^3 G_i (F_k) 
\ee
and hence, by our induction hypothesis,
\be   F_{k+1} =\{ g_2 \} \cup \bigcup_{i=1}^3 G_i (R_k \cup \{0\} ) \,. \ee
By Lemma \ref{lmm:one},
$G^i_1 g_2\in V_2$ for $i$ even and $G^i_1 g_2\in V_3$ for $i$ odd. Thus 
\begin{gather*} 
G_1 (R_{k,1}) \subset R_{k,1} \cup \{ G^{k+1}_1 g_2\} = R_{k+1,1}\,,\\
G_2( R_{k,1}) = \{ 0 \}\cup \{ G_2 G^i_1 g_2\colon \text{$i$ odd, $1 \leq i \leq k$} \} = \{ 0 \}\cup R_{k+1,2} \,,\\
G_3 (R_{k,1}) = \{ 0 \}\cup \{ G_3 G^i_1 g_2\colon \text{$i$ even, $0\leq i\leq k$} \} = \{ 0 \}\cup R_{k+1,3} \,.
\end{gather*}
By Lemma \ref{lmm:one}, $G_1 (R_{k,2}) = G_1 (R_{k,3}) = \{ 0 \}$ and by Lemma \ref{lmm:two},
$G_i (R_{k,j}) \subset \{0\}\cup R_{k+1,1}$ for $i,j\in\{2,3\}$. Since furthermore $\{g_2\} \subset R_{k+1,1}$ for each $k$, we arrive at $F_{k+1}
=R_{k+1}\cup\{0\}$.
\end{proof}

With Lemma \ref{lmm:graph} at hand, we can now prove Theorem \ref{thm:rankest2x2}.

\begin{proof}[Proof of Theorem \ref{thm:rankest2x2}]
Invoking \iref{rankif}, we know that 
\be
\rank(u_k)\leq 4+\rank (u_{k,\Gamma}),
\ee
and by the representation \eqref{interfacez} the rank of $u_{k,\Gamma}$ is bounded by
the number of linearly independent vectors in $F_k(g_\Gamma)$.

After an orthogonal decomposition of $g_\Gamma$ in the form $g_\Gamma = g_1 + g_2 + g_3$ with $g_i\in\hat{V}_i$, Lemma \ref{lmm:graph} can be applied separately
to $G_3g_1$, $G_2g_1$, $g_2$, $g_3$.  This shows that the number of non-zero elements in $F_k(g_i)$ is at most 
$(2k+1)$ for $i=2$ and $i=3$, and at most $1+ 2(2k-1)$ for $i=1$. In conclusion,
$F_k(g_\Gamma)$ contains at most $1 + 2(2k-1) + 2(2k+1) = 8k + 1$ linearly independent elements, which confirms
the estimate $8k+5$ for $\rank(u_k)$.
\end{proof}

\section{Numerical construction of low-rank approximations}\label{sec:numerics}

The numerical scheme used for the computational examples in this work is based on the iteration
\be\label{fpiter}
   u_0 := \bar{A}^{-1} f = g \,,\qquad  u_{k} := \bar{A}^{-1} \Bigl(  f + \sum_{i=1}^d y_i A_i u_{k-1} \Bigr)  = g + \sum_{i=1}^d  y_i B_i u_{k-1}\,, \; k \in \N \,.
\ee
As a consequence of Proposition \ref{propcontraction}, this is a fixed point iteration with linear convergence in $L^\infty(U,V)$. The iterates $u_k$ are precisely the partial sums \eqref{neumannpartial}, that is,
\be
u_k(y) = \sum_{\abs{\nu}\leq k} t_\nu y^\nu = \sum_{\ell = 0}^k  \Bigl(  \sum_{i=1}^d y_i B_i \Bigr)^\ell g \,.
\ee
Since we shall use the singular value decomposition to obtain low-rank approximations, we regard $u$ and the $u_k$ as elements of $L^2(U,V) \simeq V \otimes L^2(U)$ with the uniform probability measure $\mu$ on $U$, and consider convergence in this norm. 

We first introduce a discretization in the parametric variable $y$, based on a choice of basis for $L^2(U)$. We use
here the orthonormal tensor product Legendre polynomials $L_\nu$, $\nu \in \N_0^d$. The set of basis indices used in the computation is of the form 
\be 
 \Lambda_J := \{  \nu \in \N_0^d \colon \abs{\nu}\leq J\} \,, 
\ee
so that $\linspan\{ L_\nu \}_{\nu\in \Lambda_J}=\P_J$, the space
of polynomials of total degree at most $J$. For each $i$, the operator corresponding to multiplication by $y_i$ on $L^2(U)$ is replaced by its Galerkin discretization $\mathbf{M}_i := \bigl(  \langle y_i L_\nu, L_\mu \rangle_{L^2(U)} \bigr)_{\mu,\nu \in \Lambda_J }$, which is a bidiagonal matrix
(we refer to \cite{SG} for further details). Note that in this semidiscrete setting, as long as $k < J$, all operations in \eqref{fpiter} are represented exactly, and consequently we \emph{exactly recover} $u_k$ for $k < J$. As $k$ increases further, $u_k$ converges to the Galerkin projection $G_Ju$ of $u$ onto
the space  $V\otimes \P_J$. This projection  is defined by
\be
\int_U \int_D a(x,y) \nabla_x G_Ju(x,y)\nabla_x v(x,y) \,dx\,d\mu(y)=\int_U\int_D f(x)v(x,y)\,dx\,d\mu(y), \;\, v\in V\otimes \P_J.
\ee 

The discretization is completed by replacing $V$ by a fixed finite element subspace $V_h$ of dimension $M_h$. The corresponding discretizations of $\bar{A}$, $A_i$, and $f$ are denoted by $\mathbf{\bar A}$, $\mathbf{A}_i$ and $\mathbf{f}$, respectively. The iterates in the discretised version of \eqref{fpiter} can then be regarded as matrices $\mathbf{u}_k \in \R^{M_h \times N_J}$ with $N_J := \# \Lambda_J = n(d,J)$. 
As $k\to +\infty$, these iterates converge to the representation coefficients of the Galerkin projection
of $u$ on the space $V_h\otimes \P_J$.

In order to exploit the low-rank approximability, we modify the discretized version of \eqref{fpiter}
by introducing additional truncations based on the sizes of the singular values. This requires that we 
work using low-rank representations of the iterates. If $\mathbf{u}_k$ is given in low-rank form $\mathbf{u}_k = \mathbf{V}_k\boldsymbol{\Phi}_k^T$ with $\mathbf{V}_k \in \R^{M_h\times r_k}$, $\boldsymbol{\Phi}_k \in \R^{N_J\times r_k}$ with $r_k\leq \min\{M_h,N_J\}$, one step of the discretized iteration can be done by computing
\be\label{discrfpiter}
   \mathbf{\tilde u}_{k+1} := (\mathbf{\bar A}^{-1} \mathbf{f} ) \mathbf{e}_0^T + \sum_{i=1}^d (\mathbf{\bar A}^{-1} \mathbf{A}_i \mathbf{V}_k ) ( \mathbf{M}_i \boldsymbol{\Phi}_k)^T \,.
\ee
Here $\mathbf{e}_0 := (\delta_{0 \nu})_{\nu\in\Lambda_J}$, regarded as a column vector. Thus \eqref{discrfpiter} yields a representation of the form $\mathbf{\tilde u}_{k+1} = \mathbf{\tilde V}_{k+1} \boldsymbol{\tilde \Phi}_{k+1}^T$ with formal \emph{representation rank} $\tilde r_{k+1} = n r_k + 1$, which may be larger than $\rank(\mathbf{\tilde u}_{k+1})$. 
To detect and computationally exploit further low-rank structure in $\mathbf{\tilde u}_{k+1}$, we truncate its singular value decomposition (SVD) up to a prescribed tolerance $\varepsilon_k$ in the Frobenius norm. Note that in doing so, we need to use the norm induced by $\bar{A}$ on $V_h$. To achieve this, we compute a sparse Cholesky decomposition $\mathbf{\bar A} = \mathbf{L}\mathbf{L}^T$, perform the truncated SVD on $\mathbf{L}^{T} \mathbf{\tilde u}_{k+1}$, and pre-multiply the result by $\mathbf{L}^{-T}$. The actual SVD can performed at a total cost of order $\max\{ M_h,N_J\}\,  \tilde r_{k+1}^2$ by orthogonalizing the columns of $\mathbf{L}^T \mathbf{\tilde V}_{k+1}$ and $\boldsymbol{\tilde\Phi}_{k+1}$ and then performing an SVD of a matrix of size $\tilde r_{k+1}\times \tilde r_{k+1}$. We define $\mathbf{u}_{k+1} = \mathbf{V}_{k+1} \boldsymbol{\Phi}_{k+1}^T$, with rank $r_{k+1}\leq \tilde r_{k+1}$, as the result of this rank truncation. 

Note that these new iterates $\mathbf{u}_{k}$ differ from the initial iterates defined by only performing 
the discretised version of \eqref{fpiter}. With appropriately chosen truncation parameters $\varepsilon_k$, 
as $k\to \infty$ the iterates $\mathbf{u}_k$ converge linearly to the representation coefficients of the Galerkin projection
of $u$ on $V_h\otimes \P_J$, similar to the initial iterates.
The precise choice of the $\varepsilon_k$ is crucial not only for ensuring convergence, but also for the efficiency of the resulting scheme. A choice that can be shown to achieve this balance is given in \cite{BD}. In our present setting, our aim is less to obtain an efficient method than to closely reproduce the exact $u_k$, and we thus use $\varepsilon_k = 10^{-15}$, i.e., a truncation tolerance close to machine precision, for all tests in this work.

Concerning the choice of $V_h$, no triangles in the mesh should be intersected by subdomain boundaries, since this has a negative effect on the approximation error. Note further that if this is taken into account, as a consequence of \eqref{rankif}, the maximum rank of the $\mathbf{u}_k$ that can be observed numerically is bounded by $d$ plus the number of degrees of freedom on the skeleton $\Gamma$. This means in particular that in order to obtain results that reflect the actual decay of the singular values of $u$, when using only uniformly refined triangulations one needs very fine meshes to produce sufficiently many degrees of freedom on $\Gamma$. For this reason, in our numerical experiments we use meshes with a strong gradation towards $\Gamma$.

\section{Conclusions and outlook}

We have constructed low-rank approximations of solutions of certain parametric 
elliptic problems, based on assumptions on additional structures in these problems. 
Our analysis reveals particular mechanisms which, for specific problems, 
may ensure significant improvements between low-rank approximations based on 
truncated polynomial expansions and optimal low-rank approximations. 

 Whereas the rather general assumption \eqref{sumAi} yields only a rather modest 
 improvement in the $n$-widths as compared to estimates obtained by a direct polynomial 
 expansion, we arrive at a much stronger estimate in the particular setting considered 
 in Section \ref{sec:2x2}. As demonstrated there, this improvement depends rather strongly on the symmetries in the geometry of the problem. 
Numerical experiments indicate that the exponential decay of the singular values is maintained 
in the case of a similar $d=m\times m$ checkerboard structure on the unit square, however with
no general proof available at the present stage.

As an illustration we reconsider in more detail the problem with $16$ parameters of Figure \ref{fig:introdecay4x4}, with a partition of $D$ analogous to \eqref{part2x2}, but with a $4\times 4$ checkerboard pattern of squares of side length $\frac14$. 
The numerical realization was done as in the numerical experiments 
in Section 4, here using total degree 5, resulting in 20\,349 Legendre coefficients. 
The spatial discretization uses piecewise linear finite elements with 112\,961 degrees of freedom, with 12\,345 of these on subdomain interfaces.

A more detailed view of the ranks of partial sums $u_k$ and of the decay of the corresponding singular values is shown in Figure \ref{fig:decay4x4}.
The observed decay of singular values is slower than in the example of Section 4, but clearly still exponential. However, here the ranks of partial sums increase \emph{faster} than linearly. This indicates that analyzing the ranks of partial sums alone, as in the $2\times 2$  checkerboard case, 
might not be sufficient to explain the exponential decay in this case, and that a finer analysis of the singular values associated with
these partial sums may be required.

\begin{figure}
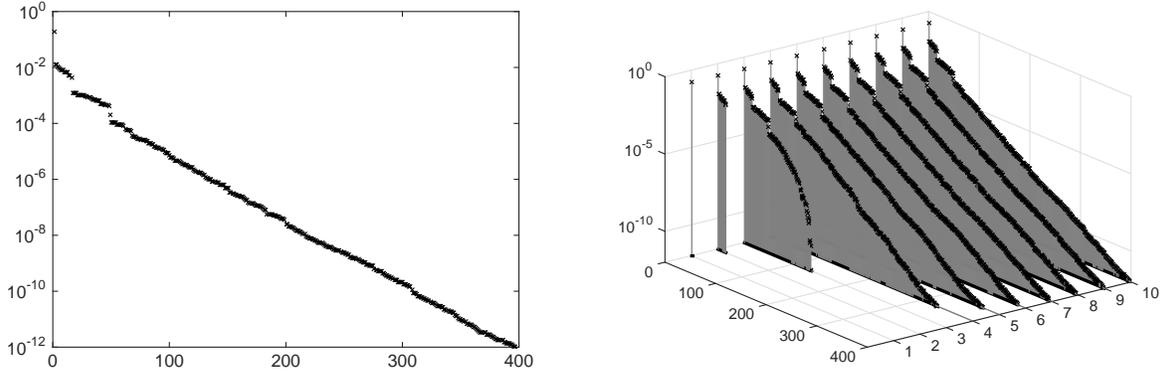

\includegraphics[width=8cm]{svs4x4}  \includegraphics[width=8cm]{neumannranks4x4} 
\caption{Decay of singular values and ranks of partial sums for 16 parameters.}
\label{fig:decay4x4}
\end{figure}

$\;$
\nl
Markus Bachmayr
\nl
UPMC Univ Paris 06, UMR 7598, Laboratoire Jacques-Louis Lions, F-75005, Paris, France
\nl
CNRS, UMR 7598, Laboratoire Jacques-Louis Lions, F-75005, Paris, France
\nl
bachmayr@ann.jussieu.fr
\nl
\nl
Albert Cohen
\nl
UPMC Univ Paris 06, UMR 7598, Laboratoire Jacques-Louis Lions, F-75005, Paris, France
\nl
CNRS, UMR 7598, Laboratoire Jacques-Louis Lions, F-75005, Paris, France
\nl
cohen@ann.jussieu.fr
\end{document}